\newtheorem{theorem}{Theorem}[section]
\newtheorem{lemma}[theorem]{Lemma}
\theoremstyle{plain}
\newtheorem{asu}{Assumption}
\title{On Strong Consistency and Asymptotic Normality of Least Absolute Deviation Estimators for 2-dimensional Sinusoidal Model}
\author{
	$ \text{Saptarshi Roy}^{1}$, $\text{Amit Mitra}^{2*}$ and $\text{N K Archak}^{2}$\\
	$^{1}$ Department of Statistics, Texas A\&M University, College Station, Texas 77840, USA\\
	$^{2}$ Department of Mathematics and Statistics, Indian Institute of Technology Kanpur, UP, India 208016\\
	$^{*}$ Corresponding author; Email: amitra@iitk.ac.in \\
	}
\begin{document}
\maketitle

\begin{abstract}
 Estimation of the parameters of a 2-dimensional sinusoidal model is a fundamental problem in digital signal processing and time series analysis.  In this paper, we propose a robust least absolute deviation (LAD) estimators for parameter estimation. The proposed methodology provides a robust alternative to non-robust estimation techniques like the least squares estimators, in situations where outliers are present in the data or in the presence of heavy tailed noise. We study important asymptotic properties of the LAD estimators and establish the strong consistency and asymptotic normality of the LAD estimators of the signal parameters of a 2-dimensional sinusoidal model. We further illustrate the advantage of using LAD estimators over least squares estimators through extensive simulation studies.  Data analysis of a 2-dimensional texture data indicates practical applicability of the proposed LAD approach.   

\keywords{: 2-dimensional sinusoidal model \and asymptotic normality\and heavy tailed noise\and image processing\and least absolute deviation estimator\and least squares estimator\and strong consistency\and texture analysis}
\end{abstract}
\section{Introduction}
Let us consider a superimposed 2-dimensional sinusoidal signal model, \begin{equation}
    y(t,s) = \sum\limits_{k=1}^{p}\Big(A_{k}^0 \cos(\lambda_{k}^0t + \mu_{k}^0s) + B_{k}^0\sin(\lambda_{k}^0t + \mu_{k}^0s)\Big) + \epsilon(t,s). \label{eqn:1}
\end{equation}
where $t \in [T], s \in [S]$, with $[T] = \{1, \cdots, T\}$. Here the signal $y(t,s)$ is decomposed into two components, the first term of right-hand side of \eqref{eqn:1} is the deterministic component and the second term is the random noise component. In this paper, we assume that the order of the above model $p$ is known. $A_{k}^0$s and $B_{k}^0$s are the unknown amplitudes and $\lambda_k^0, \mu_k^0 \in [0,\pi]$ are the unknown frequencies. Given the $TS$ signal observations, $y(1,1),\ldots, y(T,S)$, the problem is to estimate of the unknown parameters. 

2-dimensional sinusoidal models have been used in varied real life applications in the fields of image analysis and restoration, medical imaging, wireless communications, array processing, synthetic aperture radar imaging, among others. See for example, \cite{andrews}, \cite{rossi}, \cite{dudgeon}, \cite{Odendaal}, \cite{Van}, \cite{grover} and the references cited therein.  In particular, the 2-dimensional sinusoidal model has wide a variety of applications in texture analysis. \cite{francos} has shown that \eqref{eqn:1} can be used effectively in modelling texture images and proposed estimation of unknown frequencies by selecting the sharpest peaks of the periodogram function $I(\lambda, \mu)$ of the observed signals $y(t,s)$. The 2-dimensional version of the periodogram function is given by, $$I(\lambda, \mu) = \frac{1}{TS}\sum\limits_{t=1}^{T}\sum\limits_{s=1}^{S}\left|y(t,s)e^{-i(\lambda t + \mu s)}\right|^2.$$ 

Under the assumption that the number of components, $p$, is known, extensive work on model \eqref{eqn:1} or on it's variations have been carried out by several authors. \cite{rao} studied the consistency and asymptotic normality of maximum likelihood estimates of the parameters of 2-dimensional  superimposed exponential signals under normal noise distribution, \cite{kundu_mitra} established the asymptotic properties of least square estimators (LSE) for the same 2-dimensional exponential signals (see also \cite{kundugupta}), \cite{cohen} established the asymptotic properties of the LSE for a colored noise setup. \cite{mitra_stoica} derived the asymptotic Cramer-Rao lower bound (CRLB) of 2-dimensional superimposed exponential models. \cite{prasad_kundu} proposed a sequential LSE estimation procedure for parameter estimation of 2-dimensional Sinusoidal model. Recently, \cite{grover} proposed an efficient algorithm for parameter estimation, which achieves the optimal convergence rate as that of LSE in finite number of steps. Other prominent algorithms for parameter estimation of 2-dimensional sinusoidal model can be found in \cite{hua}, \cite{francos}, \cite{clark}, \cite{bansal}, \cite{zhang}, \cite{kundunandi}.

The least squares estimators have the optimal rate of convergence and achieve the CRLB asymptotically (\cite{kundu_mitra}, \cite{mitra_stoica}).  It may however be noted that, although LSE have theoretical asymptotic optimal properties, LSE or sequential LSE are well known to be non-robust under the presence of outliers  or in the presence of heavy tailed noise component. In this paper, we propose a robust Least Absolute Deviation(LAD) method of estimation for the parameters of (\ref{eqn:1}). Unlike the LSE or the sequential LSE, LAD approach gives a robust method of estimation as LAD gives equal weight to all the residuals, in contrast to LSE, which, by squaring the residuals, gives more weight to large residuals. \cite{kim} proposed LAD based estimation for a 1-dimensional superimposed sinusoidal model and studied the asymptotic properties of the LAD.  Our present work extends the work of \cite{kim} to the case of 2-dimensional  sinusoidal model. Nowhere in the literature, at least not known to the authors, any attempt has been made to study the theoretical asymptotic properties of the LAD estimators for a 2-dimensional sinusoidal model.

The rest of the paper is arranged as follows. The model assumptions and methodology are given in Section \ref{sec:2}. We derive the strong consistency and asymptotic normality of LAD  estimators for one harmonic component case, i.e. $p = 1$, in Section \ref{sec:3}. Section \ref{sec:4} gives the results for  multiple harmonic components model. Results of the simulation study to validate the asymptotic results and also to ascertain the robust performance of LAD estimators are discussed in Section \ref{sec:5}. Section \ref{sec:6} presents texture analysis of a synthetic grayscale image using the LAD approach. Finally, conclusions and implications of the present work are discussed in Section \ref{sec:7}.

\section{LAD methodology and model assumptions}
\label{sec:2}

Consider the  model given by (\ref{eqn:1}). The LAD estimators of the true parameter vector $\mathbf{\theta_0} = (A_{1}^0,B_{1}^0,\lambda_{1}^0,\mu_{1}^0,\dots,A_{p}^0,B_{p}^0,\lambda_{p}^0,\mu_{p}^0)$, denoted by $\hat{\theta}_{T,S}$, is obtained by minimizing the objective function 
\begin{equation*}
    Q_{T,S}(\mathbf{\theta}) = \frac{1}{TS}\sum\limits_{t=1}^{T}\sum\limits_{s=1}^{S}\Big|y(t,s) - \sum\limits_{k=1}^{p}(A_{k}\cos(\lambda_{k}t + \mu_{k}s) + B_{k}\sin(\lambda_{k}t + \mu_{k}s)) \Big|, \label{eq:2}
\end{equation*}

\begin{equation*}
    \text{i.e.}\:\: \hat{\theta}_{T,S}=\arg\min Q_{T,S} (\mathbf{\theta}).
\end{equation*}
Where, $\mathbf{\theta} = (A_{1},B_{1},\lambda_{1},\mu_{1},\dots,A_{p},B_{p},\lambda_{p},\mu_{p}) \in \Theta $; $\Theta \subseteq \mathbb{R}^{4p}$. 

In order to derive the asymptotic properties of LAD estimators, we make the following two assumptions for the model  (\ref{eqn:1}).
\begin{asu}\label{as1} 
  $\mathbf{\theta} = (A_{1},B_{1},\lambda_{1},\mu_{1},\dots,A_{p},B_{p},\lambda_{p},\mu_{p}) \in \Theta $, where $\Theta$ denotes the parameter space with $\Theta = (K \times K \times [0,\pi] \times [0,\pi])^{p} $, $K$ being the compact subset of \emph{$\mathbb{R}$}. The true parameter $\theta_0$ is assumed to be an interior point of the parametric space $\Theta$.
\end{asu}

\begin{asu}\label{as2}
  $\epsilon(t,s)$ are independent and identically (i.i.d) random variables with common distribution function $G(.)$ and continuous probability density function $g(.)$ such that $G(.)$ has unique median at 0 i.e. $G(0) = \frac{1}{2}$. Moreover, let $g'(.)$ exists and is bounded. Further, we assume that second order moment for $\epsilon(t,s)$ is finite i.e. $\mathbb{E}[\epsilon(t,s)^{2}] < \infty~, \forall~ t,s$.
\end{asu}
\section{Asymptotic results for one component model}\label{sec:3}
For simplicity, let us consider a one component model (i.e. $p=1$ in (\ref{eqn:1})).  In this section, we derive the strong consistency of LAD estimators for a one component model. The results for a  $p>1$ harmonic components model will be discussed in Section \ref{sec:5}. For $p=1$, (\ref{eqn:1}) is 
\begin{equation}
     y(t,s) = A^0 \cos(\lambda^0 t + \mu^0 s) + B^0 \sin(\lambda^0 t + \mu^0 s) + \epsilon(t,s)~~ t\in [T], s\in[S]. \label{eq:3}
\end{equation}
To derive the strong consistency and asymptotic normality results of LAD estimators under Assumption \ref{as1} and Assumption \ref{as2}, we need the following lemma.
\begin{lemma} \label{lemma:1}
If $(\theta_{1},\theta_{2})~\in~(0,\pi)\times(0,\pi)~ \text{and}~ t,s\in\{0,1,2\}$, then except for countable number of points the following are true,

\item[1.] \label{item-one}
    $\lim\limits_{T,S \rightarrow \infty} \frac{1}{T^{k_1+1}S^{k_2+1}} \sum\limits_{t=1}^{T} \sum\limits_{s=1}^{S} t^{k_1} s^{k_2} \cos^2(\theta_1t + \theta_2s) = \frac{1}{2(k_1+1)(k_2+1)}$ 

\item[2.] \label{item-two} $\lim\limits_{T,S \rightarrow \infty} \frac{1}{T^{k_1+1}S^{k_2+1}} \sum\limits_{t=1}^{T} \sum\limits_{s=1}^{S} t^{k_1} s^{k_2} \sin^2(\theta_1t + \theta_2s) = \frac{1}{2(k_1+1)(k_2+1)}$ 

\item[3.] \label{item-three} $\lim\limits_{T,S \rightarrow \infty} \frac{1}{T^{k_1+1}S^{k_2+1}} \sum\limits_{t=1}^{T} \sum\limits_{s=1}^{S} t^{k_1} s^{k_2} \cos(\theta_1t + \theta_2s) = 0$ 

\item[4.] \label{item-four} $\lim\limits_{T,S \rightarrow \infty} \frac{1}{T^{k_1+1}S^{k_2+1}} \sum\limits_{t=1}^{T} \sum\limits_{s=1}^{S} t^{k_1} s^{k_2} \sin(\theta_1t + \theta_2s) = 0$ 

\item[5.] \label{item-five} $\lim\limits_{T,S \rightarrow \infty} \frac{1}{T^{k_1+1}S^{k_2+1}} \sum\limits_{t=1}^{T} \sum\limits_{s=1}^{S} t^{k_1} s^{k_2} \sin(\theta_1t + \theta_2s)\cos(\theta_1t + \theta_2s) = 0$

\end{lemma}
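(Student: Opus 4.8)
The plan is to reduce all five identities to a single one‑dimensional exponential‑sum estimate. First I would record the elementary Riemann‑sum fact that, for $k\in\{0,1,2\}$,
\[
\frac{1}{T^{k+1}}\sum_{t=1}^{T}t^{k}=\frac{1}{T}\sum_{t=1}^{T}\Big(\frac{t}{T}\Big)^{k}\longrightarrow\int_{0}^{1}x^{k}\,dx=\frac{1}{k+1},
\]
and hence, since the summand factorises, $\frac{1}{T^{k_1+1}S^{k_2+1}}\sum_{t,s}t^{k_1}s^{k_2}\to\frac{1}{(k_1+1)(k_2+1)}$. Next, applying $\cos^{2}x=\tfrac12+\tfrac12\cos 2x$, $\sin^{2}x=\tfrac12-\tfrac12\cos 2x$ and $\sin x\cos x=\tfrac12\sin 2x$ with $x=\theta_1t+\theta_2s$, each of items 1, 2 and 5 splits into the above polynomial average (which supplies the stated constant, or $0$ for item 5) plus a term of the form $\frac{1}{2T^{k_1+1}S^{k_2+1}}\sum_{t,s}t^{k_1}s^{k_2}\cos(2\theta_1t+2\theta_2s)$ or its $\sin$ analogue. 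Thus items 1, 2, 5 follow once items 3 and 4 are proved, the latter being applied at the frequency pair $(2\theta_1,2\theta_2)$.

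For items 3 and 4 I would combine them into the complex sum and factorise:
\[
\frac{1}{T^{k_1+1}S^{k_2+1}}\sum_{t=1}^{T}\sum_{s=1}^{S}t^{k_1}s^{k_2}e^{i(\theta_1t+\theta_2s)}=\Bigg(\frac{1}{T^{k_1+1}}\sum_{t=1}^{T}t^{k_1}e^{i\theta_1t}\Bigg)\Bigg(\frac{1}{S^{k_2+1}}\sum_{s=1}^{S}s^{k_2}e^{i\theta_2s}\Bigg),
\]
so it suffices to show the one‑dimensional statement: for $\theta\notin 2\pi\mathbb{Z}$ and $k\in\{0,1,2\}$ one has $\frac{1}{T^{k+1}}\sum_{t=1}^{T}t^{k}e^{i\theta t}\to 0$. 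For $k=0$ this is immediate from the geometric sum $\sum_{t=1}^{T}e^{i\theta t}=e^{i\theta}(e^{i\theta T}-1)/(e^{i\theta}-1)$, which is bounded in $T$ because $e^{i\theta}\neq 1$. For $k=1,2$ I would use Abel summation against the bounded partial sums $E_{T}:=\sum_{t=1}^{T}e^{i\theta t}$, $|E_{T}|\le 2/|e^{i\theta}-1|=:C$: writing $\sum_{t=1}^{T}t^{k}e^{i\theta t}=T^{k}E_{T}-\sum_{t=1}^{T-1}\big((t+1)^{k}-t^{k}\big)E_{t}$ and using that the increments $(t+1)^{k}-t^{k}$ are nonnegative and telescope, one obtains $\big|\sum_{t=1}^{T}t^{k}e^{i\theta t}\big|\le 2CT^{k}$, so division by $T^{k+1}$ gives a bound $O(1/T)\to 0$. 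Taking real and imaginary parts of the displayed product — each factor of which is $O(1/T)$ resp. $O(1/S)$ by the one‑dimensional estimate, since $\theta_1,\theta_2\in(0,\pi)$ lie outside $2\pi\mathbb{Z}$ — proves items 3 and 4.

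Finally, for the passage $(\theta_1,\theta_2)\mapsto(2\theta_1,2\theta_2)$ needed in items 1, 2, 5 one requires $2\theta_j\notin 2\pi\mathbb{Z}$, i.e. $\theta_j\neq 0,\pi$, which holds throughout the open square $(0,\pi)^{2}$; so the stated limits in fact hold at every point of $(0,\pi)^{2}$ and the ``countably many exceptional points'' clause is vacuous in this grid setting. I do not foresee a deep obstacle here: the only step needing care is the Abel‑summation bound, where one must check that the remainder is genuinely $O(T^{k})$ (not merely $o(T^{k+1})$) so that, after normalising by $T^{k_1+1}S^{k_2+1}$ and multiplying the two one‑dimensional factors, the product tends to $0$ in the joint limit $T,S\to\infty$ — which it does because each factor is separately $O(1/T)$ resp. $O(1/S)$.
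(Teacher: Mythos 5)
Your proof is correct, but it takes a genuinely different route from the paper: the paper disposes of this lemma in one line by citing a trigonometric-sum result of Vinogradov, whereas you give a self-contained elementary argument. Your reduction is clean — double-angle identities turn items 1, 2, 5 into a Riemann-sum term plus an oscillatory term at frequency $(2\theta_1,2\theta_2)$; the two-dimensional exponential sum factorises exactly into a product of one-dimensional sums; and the one-dimensional bound $\bigl\lvert\sum_{t=1}^{T}t^{k}e^{i\theta t}\bigr\rvert\le 2CT^{k}$ follows from Abel summation against the bounded geometric partial sums. I checked the summation-by-parts identity and the telescoping bound $\sum_{t=1}^{T-1}\bigl((t+1)^{k}-t^{k}\bigr)=T^{k}-1$, and both are right, as is the observation that the required shift to $(2\theta_1,2\theta_2)$ only needs $\theta_j\neq 0,\pi$. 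What your approach buys is transparency and a slightly stronger conclusion: the limits hold at \emph{every} point of $(0,\pi)^2$, so the ``except for a countable number of points'' caveat (inherited from the more general Vinogradov-type statements, where the phases need not split as $\theta_1 t+\theta_2 s$) is indeed vacuous here. What the citation buys the paper is brevity and applicability to variants of the lemma where the summand does not factorise. One cosmetic point: the lemma's hypothesis ``$t,s\in\{0,1,2\}$'' is evidently a typo for $k_1,k_2\in\{0,1,2\}$, which is the reading your proof correctly adopts.
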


\begin{proof}
    Follows from the result of  \cite{vinogradov}.
\end{proof}

The following theorem presents the strong consistency result of the LAD estimators.
\begin{theorem}\label{theorem:1}
  Under Assumption \ref{as1} and Assumption \ref{as2}, LAD estimator $\hat{\theta}_{T,S}$ is strongly consistent for $\theta_0$,  
   \begin{align*}
		\text{i.e.} \:\: \hat{\theta}_{T,S} \xrightarrow{a.s.}  {\theta}_0, \:\: \text{as} \:\: min\{T,S\} \rightarrow \infty.
	\end{align*}  
\end{theorem}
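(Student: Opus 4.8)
The plan is to run the classical argument for consistency of a minimum-contrast estimator. Arguing by contradiction, suppose $\hat\theta_{T,S}$ does not converge almost surely to $\theta_0$; then (on a positive-probability event, intersected with the probability-one event on which the uniform strong law of the third step below holds) there is a sequence $(T_n,S_n)$ with $\min\{T_n,S_n\}\to\infty$ and an $\varepsilon_0>0$ with $\norm{\hat\theta_{T_n,S_n}-\theta_0}\ge\varepsilon_0$, and, $\Theta$ being compact (Assumption~\ref{as1}), after passing to a further subsequence we may assume $\hat\theta_{T_n,S_n}\to\theta^{*}$ with $\norm{\theta^{*}-\theta_0}\ge\varepsilon_0>0$. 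Writing $f(t,s;\theta)=A\cos(\lambda t+\mu s)+B\sin(\lambda t+\mu s)$ and $g_{t,s}(\theta)=f(t,s;\theta)-f(t,s;\theta_0)$, and using $y(t,s)=f(t,s;\theta_0)+\epsilon(t,s)$ together with $g_{t,s}(\theta_0)\equiv0$, the key decomposition is
\[
Q_{T,S}(\theta)-Q_{T,S}(\theta_0)=\frac1{TS}\sum_{t=1}^{T}\sum_{s=1}^{S}\Big(\big|\epsilon(t,s)-g_{t,s}(\theta)\big|-\big|\epsilon(t,s)\big|\Big),
\]
where $|g_{t,s}(\theta)|\le C$ uniformly in $\theta,t,s$ on the compact set $\Theta$. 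The goal is to show that the right-hand side, evaluated at $\hat\theta_{T_n,S_n}$, is asymptotically strictly positive, contradicting $Q_{T_n,S_n}(\hat\theta_{T_n,S_n})\le Q_{T_n,S_n}(\theta_0)$.

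The first ingredient is the population contrast. For $c\in\mathbb R$ set $\ell(c):=\mathbb E\big[|\epsilon(t,s)-c|-|\epsilon(t,s)|\big]=\int_0^c\big(2G(u)-1\big)\,du$. By Assumption~\ref{as2} the median of $G$ is uniquely $0$, so $G(u)>\tfrac12$ for $u>0$ and $G(u)<\tfrac12$ for $u<0$; hence $\ell$ is convex with $\ell(0)=0$, strictly decreasing on $(-\infty,0]$ and strictly increasing on $[0,\infty)$, so for each $\eta>0$ we have $m(\eta):=\min\{\ell(\eta),\ell(-\eta)\}>0$ and $\ell(c)\ge m(\eta)\,\mathbf 1\{|c|\ge\eta\}$. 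Splitting $\tfrac1{TS}\sum_{t,s}g_{t,s}(\theta)^2$ according to whether $|g_{t,s}(\theta)|\ge\eta$ yields, for every $\theta$,
\[
\frac1{TS}\sum_{t,s}\ell\big(g_{t,s}(\theta)\big)\ \ge\ \frac{m(\eta)}{C^2}\Big(\frac1{TS}\sum_{t,s}g_{t,s}(\theta)^2-\eta^2\Big).
\]

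The second ingredient is a uniform strong law of large numbers,
\[
\sup_{\theta\in\Theta}\Big|\big(Q_{T,S}(\theta)-Q_{T,S}(\theta_0)\big)-\frac1{TS}\sum_{t,s}\ell\big(g_{t,s}(\theta)\big)\Big|\ \xrightarrow{a.s.}\ 0 .
\]
The centred summands $|\epsilon(t,s)-g_{t,s}(\theta)|-|\epsilon(t,s)|-\ell(g_{t,s}(\theta))$ are independent, mean zero and bounded by $2C$, so for fixed $\theta$ Hoeffding's inequality and Borel--Cantelli give a.s.\ convergence to $0$; to make this uniform I would cover $\Theta$ by $O\big((TS)^{4}\big)$ balls of radius $(TS)^{-1}$, note that $\theta\mapsto g_{t,s}(\theta)$ is Lipschitz with constant $O(t+s)=O(T+S)$ while $x\mapsto|\epsilon-x|-|\epsilon|-\ell(x)$ is $2$-Lipschitz, so the oscillation of the average over each ball is $O\big((T+S)/(TS)\big)=o(1)$, and the union bound over the centres against the exponential tail is summable in $(T,S)$.

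It remains to show $D^{*}:=\liminf_n\tfrac1{T_nS_n}\sum_{t,s}g_{t,s}(\hat\theta_{T_n,S_n})^2>0$; this is essentially the estimate that underpins consistency of the least-squares estimator for this model. Expanding $g_{t,s}^2=f(\cdot;\hat\theta_{T_n,S_n})^2+f(\cdot;\theta_0)^2-2f(\cdot;\hat\theta_{T_n,S_n})f(\cdot;\theta_0)$ and using Lemma~\ref{lemma:1} together with the elementary fact that $\tfrac1{TS}\sum_{t,s}\cos(\omega_1 t+\omega_2 s+\phi)\to0$ whenever $(\omega_1,\omega_2)\not\equiv(0,0)\ (\mathrm{mod}\ 2\pi)$: if $(\lambda^{*},\mu^{*})\neq(\lambda^0,\mu^0)$ the cross term vanishes and $D^{*}\ge\tfrac12\big((A^0)^2+(B^0)^2\big)>0$ (the signal being non-degenerate, as required for identifiability), while if $(\lambda^{*},\mu^{*})=(\lambda^0,\mu^0)$ then necessarily $\norm{(A^{*},B^{*})-(A^0,B^0)}\ge\varepsilon_0/2$ and a short computation, controlling the near-diagonal term $\tfrac1{TS}\sum_{t,s}\cos\big((\hat\lambda_{T_n,S_n}-\lambda^0)t+(\hat\mu_{T_n,S_n}-\mu^0)s\big)$ via Cauchy--Schwarz, again gives $D^{*}>0$. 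Choosing $\eta=\tfrac12\sqrt{D^{*}}$ and combining the three displays,
\[
0\ \ge\ Q_{T_n,S_n}(\hat\theta_{T_n,S_n})-Q_{T_n,S_n}(\theta_0)\ \ge\ \frac{m(\eta)}{C^2}\Big(\tfrac1{T_nS_n}\sum_{t,s}g_{t,s}(\hat\theta_{T_n,S_n})^2-\eta^2\Big)-o(1)\ \longrightarrow\ \frac{m(\eta)\,D^{*}}{2C^2}>0 ,
\]
the desired contradiction, so $\hat\theta_{T,S}\xrightarrow{a.s.}\theta_0$. I expect the genuine obstacle to be precisely the verification $D^{*}>0$ and, relatedly, the uniform strong law: since $g_{t,s}(\theta)$ is \emph{not} Lipschitz in the frequency coordinates uniformly over the grid, one cannot pass to limits naively, and the case $(\hat\lambda,\hat\mu)\to(\lambda^0,\mu^0)$ with non-converging amplitudes needs the careful quantitative treatment indicated above. (Note that, beyond compactness of $\Theta$, consistency uses only the uniqueness of the median; the finite-second-moment part of Assumption~\ref{as2} is not needed here and enters only later, for asymptotic normality.)
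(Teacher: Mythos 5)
Your overall strategy coincides with the paper's: both proofs reduce consistency to (i) a uniform strong law for the centred criterion $Q_{T,S}(\theta)-Q_{T,S}(\theta_0)$ over the compact $\Theta$ and (ii) the fact that the limiting population contrast is uniquely minimized at $\theta_0$, the latter resting on $\liminf\frac{1}{TS}\sum_{t,s}g_{t,s}(\theta)^2>0$ for $\theta\neq\theta_0$ via Lemma~\ref{lemma:1}. The execution, however, is genuinely different and in several places tighter than the paper's. You compute the population contrast in closed form, $\ell(c)=\int_0^c(2G(u)-1)\,du$, and convert positivity of the median condition into the quantitative bound $\frac{1}{TS}\sum\ell(g_{t,s})\ge \frac{m(\eta)}{C^2}\bigl(\frac{1}{TS}\sum g_{t,s}^2-\eta^2\bigr)$; the paper instead evaluates $\mathbb{E}[Z_{t,s}]$ by integration by parts and the integral mean value theorem and then bounds $Q(\theta)$ below by $\lim\frac{2}{TS}\sum|h_{t,s}|\min\{G(-h^{**}_{t,s})-\tfrac12,\tfrac12-G(-h^{*}_{t,s})\}$, a bound whose positivity is not actually immediate since the factor multiplying $|h_{t,s}|$ degenerates as $h_{t,s}\to0$ --- your $\eta$-truncation is exactly the device that repairs this. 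For the uniform law you use Hoeffding plus a $(TS)^{-1}$-net with an explicit $O(T+S)$ Lipschitz bound in the frequencies, where the paper partitions $\Theta$ into finitely many cells with oscillation $\epsilon/2^t2^s$ and appeals to Kolmogorov's SLLN; your version is more transparent and also avoids having to establish existence of $\lim_{T,S}\mathbb{E}(H_{T,S}(\theta))$. Finally, you close by a subsequence/contradiction argument where the paper invokes Lemma 2.2 of White; these are equivalent. Your remark that the finite-second-moment part of Assumption~\ref{as2} is not used for consistency is correct and applies to the paper's proof as well.

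The one step you have not actually carried out is the verification $D^{*}>0$ in the case $(\hat\lambda,\hat\mu)\to(\lambda^0,\mu^0)$ with non-convergent amplitudes, and the tool you name there --- Cauchy--Schwarz on $\rho_{T,S}=\frac{1}{TS}\sum\cos(\delta_\lambda t+\delta_\mu s)$ --- is not sufficient on its own. Writing $\rho_{T,S}+i\sigma_{T,S}=\frac{1}{TS}\sum e^{i(\delta_\lambda t+\delta_\mu s)}$, the limit of $\frac{1}{TS}\sum g_{t,s}^2$ is $\tfrac12(\hat A^2+\hat B^2)+\tfrac12({A^0}^2+{B^0}^2)-(\hat AA^0+\hat BB^0)\rho-(\hat BA^0-\hat AB^0)\sigma+o(1)$, and if one only knows $\rho^2+\sigma^2\le1$ this can vanish whenever $\hat A^2+\hat B^2={A^0}^2+{B^0}^2$ with $(\hat A,\hat B)\neq(A^0,B^0)$. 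One genuinely needs the Dirichlet-kernel factorization $\rho_{T,S}+i\sigma_{T,S}=\bigl(\frac{1}{T}\sum_te^{i\delta_\lambda t}\bigr)\bigl(\frac{1}{S}\sum_se^{i\delta_\mu s}\bigr)$, which shows that $|\rho_{T,S}+i\sigma_{T,S}|\to1$ forces the phase to vanish, i.e. $(\rho,\sigma)\to(1,0)$, and hence $(\hat A,\hat B)\to(A^0,B^0)$, a contradiction. To be fair, the paper glosses over exactly the same point: its asserted limit $\lim\frac{1}{TS}\sum|h_{t,s}(\theta)|^2=\tfrac12(A_0^2+B_0^2+A^2+B^2)$ holds only for fixed $\theta$ with $(\lambda,\mu)\neq(\lambda^0,\mu^0)$ and does not by itself dispose of sequences of estimates whose frequencies approach the truth at rate $O(1/T)$ or $O(1/S)$. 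So your proposal is at least as complete as the published argument, but this case would need the explicit treatment above before the proof is airtight.
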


\begin{proof}
Please refer to Appendix \ref{Appendix:1} for the detailed exposition of the proof. 
\end{proof}

The following theorem gives the asymptotic normality property of LAD estimators.
\begin{theorem}\label{theorem:2}
If Assumption \ref{as1} and Assumption \ref{as2} hold for (1), then \\ $\left(\sqrt{TS}(\hat{A}_{T,S}-A^0),\sqrt{TS}(\hat{B}_{T,S}-B^0),T^{\frac{3}{2}}S^{\frac{1}{2}}(\hat{\lambda}_{T,S}-\lambda^0),S^{\frac{3}{2}}T^{\frac{1}{2}}(\hat{\mu}_{T,S}-\mu^0)\right)$ converges in distribution to $N_4 \left(0,\frac{1}{4g^{2}(0)}\Sigma^{-1}\right)$ as $min\{T,S\} \rightarrow \infty,$ where $$\Sigma = \begin{pmatrix}
\frac{1}{2} & 0 & \frac{B^{0}}{4} & \frac{B^{0}}{4}\\
0 & \frac{1}{2} & -\frac{A^{0}}{4} & -\frac{A^{0}}{4}\\
\frac{B^{0}}{4} & -\frac{A^{0}}{4} & \frac{{A^{0}}^{2} + {B^{0}}^{2}}{6} & \frac{{A^{0}}^{2}+{B^{0}}^{2}}{8} \\
\frac{B^{0}}{4} & -\frac{A^{0}}{4} & \frac{{A^{0}}^{2} + {B^{0}}^{2}}{8} & \frac{{A^{0}}^{2} + {B^{0}}^{2}}{6}
\end{pmatrix}. $$
\end{theorem}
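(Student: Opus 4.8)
The plan is to follow the standard Taylor-expansion route for $M$-estimators, adapted to the non-differentiable LAD criterion as in \cite{kim}. Write $Q_{T,S}(\theta) = \frac{1}{TS}\sum_{t,s}|\epsilon(t,s) - r_{t,s}(\theta)|$, where $r_{t,s}(\theta) = A\cos(\lambda t+\mu s)+B\sin(\lambda t+\mu s) - A^0\cos(\lambda^0 t+\mu^0 s) - B^0\sin(\lambda^0 t+\mu^0 s)$. Because the absolute value is not differentiable, I would work with the left/right derivative of $Q_{T,S}$ and use the fact that the LAD estimator nearly zeroes the subgradient; equivalently, I would use the Bahadur-type representation. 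Introduce the scaling matrix $D_{T,S} = \mathrm{diag}\big((TS)^{1/2},(TS)^{1/2},T^{3/2}S^{1/2},S^{3/2}T^{1/2}\big)$ so that $D_{T,S}(\hat\theta_{T,S}-\theta_0)$ is the quantity whose limit law we want. The two ingredients are: (i) a central limit theorem for the normalized score $D_{T,S}^{-1}\nabla_\theta^{\text{sub}} Q_{T,S}(\theta_0)$, and (ii) a locally uniform quadratic approximation of $Q_{T,S}$ around $\theta_0$ with Hessian converging (after scaling) to a deterministic matrix proportional to $\Sigma$.

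First I would compute the (generalized) gradient at the truth. Differentiating formally, the score has components of the form $-\frac{1}{TS}\sum_{t,s}\mathrm{sgn}(\epsilon(t,s))\,\partial_\theta r_{t,s}(\theta_0)$, where $\partial_\theta r_{t,s}(\theta_0)$ involves $\cos(\lambda^0 t+\mu^0 s)$, $\sin(\lambda^0 t+\mu^0 s)$, $-t(A^0\sin - B^0\cos)$, $-s(A^0\sin - B^0\cos)$. After multiplying by $D_{T,S}$, each component is a weighted sum of the i.i.d.\ mean-zero bounded variables $\mathrm{sgn}(\epsilon(t,s))$ (mean zero by Assumption \ref{as2}, since $G(0)=1/2$; variance $1$). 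I would apply a Lyapunov/Lindeberg CLT for weighted sums of i.i.d.\ variables; the variance-covariance of the limiting normal is computed using the trigonometric sum identities of Lemma \ref{lemma:1} (with $k_1,k_2\in\{0,1\}$), which yield exactly the entries of $\Sigma$ (the $1/2$'s from $\cos^2,\sin^2$ averaging to $1/2$, the $1/6$'s from $\frac{1}{T^3}\sum t^2 \to 1/3$ combined with the $1/2$, the $1/8$'s from the cross terms $\frac{1}{T^2}\sum t \cdot \frac{1}{S^2}\sum s \to 1/4$ times $1/2$, etc.). So $D_{T,S}^{-1}\nabla Q_{T,S}(\theta_0) \xrightarrow{d} N_4(0,\Sigma)$.

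Next I would establish the quadratic expansion. Using $|\epsilon - r| - |\epsilon| = -r\,\mathrm{sgn}(\epsilon) + 2\int_0^r (\mathbf{1}\{\epsilon\le u\} - \mathbf{1}\{\epsilon\le 0\})\,du$ (Knight's identity), write $TS\big(Q_{T,S}(\theta_0 + D_{T,S}^{-1}u) - Q_{T,S}(\theta_0)\big)$ as a linear term $-u^\top \big(D_{T,S}^{-1}\nabla Q_{T,S}(\theta_0)\big)$ plus a remainder whose expectation, via a Taylor expansion of $G$ around $0$ and the boundedness of $g'$ from Assumption \ref{as2}, converges to $g(0)\,u^\top \Sigma u$; I would show the remainder concentrates around its mean (a variance bound suffices, using again Lemma \ref{lemma:1} to control the scaled sums of squared trig/polynomial weights). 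This gives, locally uniformly in $u$ on compacts, a convex-in-$u$ limiting objective $-u^\top Z + g(0)\,u^\top\Sigma u$ with $Z\sim N_4(0,\Sigma)$. Since the objectives are convex in $u$ and converge pointwise, the argmin converges (convexity lemma of Hjort–Pollard / Davis et al.), so $D_{T,S}(\hat\theta_{T,S}-\theta_0) \xrightarrow{d} \arg\min_u\{-u^\top Z + g(0) u^\top \Sigma u\} = \frac{1}{2g(0)}\Sigma^{-1}Z \sim N_4\big(0,\frac{1}{4g^2(0)}\Sigma^{-1}\Sigma\,\Sigma^{-1}\big) = N_4\big(0,\frac{1}{4g^2(0)}\Sigma^{-1}\big)$, as claimed.

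The main obstacle is the non-differentiability of the objective: I cannot simply invoke a mean-value theorem on the gradient, so the argument has to go through either Knight's identity plus a convexity-in-$u$ argument, or an empirical-process/stochastic-equicontinuity bound showing that the remainder in the Bahadur representation is $o_p(1)$ uniformly on $\{|u|\le M\}$. Controlling this remainder requires the tightness/concentration estimates referenced above, and this is where the moment assumption $\mathbb{E}\,\epsilon(t,s)^2 < \infty$ and the smoothness of $g$ (bounded $g'$) do the real work; the rest is bookkeeping with the trigonometric identities of Lemma \ref{lemma:1}. Strong consistency from Theorem \ref{theorem:1} is used to restrict attention to a shrinking neighborhood of $\theta_0$, justifying the local analysis. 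Full details are deferred to the Appendix.
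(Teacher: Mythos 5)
Your route is genuinely different from the paper's. The paper does not use Knight's identity or a convexity argument at all: it replaces $Q_{T,S}$ by a smoothed surrogate $Q^{*}_{T,S}(\theta)=\frac{1}{TS}\sum_{t,s}\delta_{T,S}(h_{t,s}(\theta)+\epsilon(t,s))$, where $\delta_{T,S}$ is a twice continuously differentiable even approximation of $|\cdot|$ governed by a bandwidth $\beta_{T,S}\to\infty$ with $T^2S^2=o(\beta_{T,S}^3)$ and $\beta_{T,S}=o(TS)$. It shows $\sup_\theta TS\{Q^{*}_{T,S}(\theta)-Q_{T,S}(\theta)\}=o_p(1)$, deduces that the minimizer $\theta'_{T,S}$ of $Q^{*}_{T,S}$ and the LAD estimator $\hat\theta_{T,S}$ agree to the relevant rates via positive definiteness of the scaled Hessian, and then runs the classical mean-value-theorem argument on the smooth $Q^{*}_{T,S}$, with the score CLT (Lindeberg--Feller plus Cram\'er--Wold) and the Hessian limit $2g(0)\Sigma$ giving the same covariance $\frac{1}{4g^2(0)}\Sigma^{-1}$ you obtain. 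Your computation of $\Sigma$ from Lemma \ref{lemma:1} and of the limiting argmin $\frac{1}{2g(0)}\Sigma^{-1}Z$ are both consistent with the paper.

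The genuine gap in your proposal is the convexity step. The convexity lemma of Hjort--Pollard requires the \emph{finite-sample} reparametrized objectives $u\mapsto TS\big(Q_{T,S}(\theta_0+D_{T,S}^{-1}u)-Q_{T,S}(\theta_0)\big)$ to be convex in $u$; pointwise convergence to the convex limit $-u^{\top}Z+g(0)u^{\top}\Sigma u$ is not enough on its own. Here the residual $r_{t,s}(\theta)$ depends on $(\lambda,\mu)$ through $\cos(\lambda t+\mu s)$ and $\sin(\lambda t+\mu s)$, so $Q_{T,S}$ is not convex in the frequency coordinates and the lemma does not apply (it would for LAD in a linear regression, which is the setting of Pollard's argument). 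Your fallback --- a stochastic-equicontinuity bound making the Bahadur remainder $o_p(1)$ uniformly on $\{\|u\|\le M\}$ --- can in principle replace convexity, but it then requires a separate proof of tightness of $D_{T,S}(\hat\theta_{T,S}-\theta_0)$, i.e.\ that $\hat\lambda_{T,S}-\lambda^0=O_p(T^{-3/2}S^{-1/2})$ and $\hat\mu_{T,S}-\mu^0=O_p(S^{-3/2}T^{-1/2})$; for sinusoidal models this rate result is itself a substantial argument and is not supplied by Theorem \ref{theorem:1}, which only gives consistency. As written, the proposal therefore asserts the conclusion of the argmin-convergence step without a valid justification for either of the two hypotheses (convexity, or uniform approximation plus tightness) that would deliver it.
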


\begin{proof}
 Please refer to Appendix \ref{Appendix:2} for the detailed exposition of the proof.   
\end{proof}
 
\section{Asymptotic results for multiple harmonic components model}
\label{sec:4}
In this section we present the results of strong consistency and asymptotic normality of the least absolute deviation estimators for a multi component ($p>1$) model. We have the following result for the strong consistency for a multi component model. 
\begin{theorem}\label{theorem:3}
   Under Assumption \ref{as1} and Assumption \ref{as2}, $\hat{\theta}_{T,S}$, the LAD estimator of $\theta_0$ for the model \ref{eqn:1}, is strongly consistent for $\theta_0$, i.e.  \:\: $$\hat{\theta}_{T,S} \xrightarrow{a.s.}  \hat{\theta}_0, \:\: \text{as} \:\: min\{T,S\} \rightarrow \infty$$.
\end{theorem}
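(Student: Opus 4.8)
The plan is to run the same scheme as in the proof of Theorem~\ref{theorem:1}, but now with the superposition of $p$ sinusoids, using the orthogonality relations of Lemma~\ref{lemma:1} (specialised to $k_1=k_2=0$) to kill the interactions between distinct harmonic components. Since $\Theta=(K\times K\times[0,\pi]\times[0,\pi])^{p}$ is compact, it is enough to argue by contradiction: if $\hat\theta_{T,S}\not\xrightarrow{a.s.}\theta_0$, then on an event of positive probability there is a subsequence of $(T,S)$ along which $\hat\theta_{T,S}\to\theta^{\ast}$ for some $\theta^{\ast}\neq\theta_0$ (throughout, $\theta_0$ is read modulo the harmless relabelling of the $p$ components, fixed once and for all by the usual ordering convention on the frequency pairs). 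We will show this forces $Q_{T,S}(\hat\theta_{T,S})-Q_{T,S}(\theta_0)$ to stay bounded below by a positive constant for all large $T,S$, which is impossible because $\hat\theta_{T,S}$ minimises $Q_{T,S}$.

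Write $g_{t,s}(\theta)=\sum_{k=1}^{p}\big(A_k\cos(\lambda_k t+\mu_k s)+B_k\sin(\lambda_k t+\mu_k s)\big)$ and $d_{t,s}(\theta)=g_{t,s}(\theta_0)-g_{t,s}(\theta)$, so that $y(t,s)-g_{t,s}(\theta)=\epsilon(t,s)+d_{t,s}(\theta)$. With $h(c):=\mathbb{E}\big[\,|\epsilon(t,s)+c|-|\epsilon(t,s)|\,\big]$, adding and subtracting $h(d_{t,s}(\theta))$ gives
\begin{equation*}
  Q_{T,S}(\theta)-Q_{T,S}(\theta_0)=R_{T,S}(\theta)+\frac{1}{TS}\sum_{t=1}^{T}\sum_{s=1}^{S}h\big(d_{t,s}(\theta)\big),
\end{equation*}
where $R_{T,S}(\theta)=\frac{1}{TS}\sum_{t,s}\big(|\epsilon(t,s)+d_{t,s}(\theta)|-|\epsilon(t,s)|-h(d_{t,s}(\theta))\big)$ is an average of mean-zero, bounded terms. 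Because $G(0)=\tfrac12$ and $g'$ is bounded, $h$ is convex with $h(0)=h'(0)=0$, $h(c)>0$ for $c\neq0$, and $h(c)\ge\alpha c^{2}$ on the (bounded) range of the $d_{t,s}(\theta)$, $\theta\in\Theta$, for a suitable $\alpha>0$; moreover each $\theta\mapsto d_{t,s}(\theta)$ is Lipschitz on $\Theta$ with constant independent of $t,s$. Hence the uniform strong law / maximal-inequality argument used for $p=1$ applies verbatim (the bounded Lipschitz structure is unaffected by the number of components) and yields $\sup_{\theta\in\Theta}|R_{T,S}(\theta)|\to0$ almost surely; the same reasoning shows $\frac{1}{TS}\sum_{t,s}h(d_{t,s}(\theta))$ is asymptotically equicontinuous on $\Theta$, so it suffices to control its pointwise limits near $\theta^{\ast}$.

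The crux is to show $\liminf_{T,S}\frac{1}{TS}\sum_{t,s}d_{t,s}(\theta)^{2}$ is bounded below by a positive constant for all $\theta$ in a neighbourhood of $\theta^{\ast}$. Expanding $d_{t,s}(\theta)^2=g_{t,s}(\theta_0)^2-2g_{t,s}(\theta_0)g_{t,s}(\theta)+g_{t,s}(\theta)^2$ and invoking Lemma~\ref{lemma:1}(1)--(5) with $k_1=k_2=0$, every product of two sinusoids whose frequency pairs differ averages to $0$, while a matched pair contributes the appropriate amplitude product; grouping terms, and using that the true pairs $(\lambda_j^0,\mu_j^0)$ are distinct with $(A_j^0)^2+(B_j^0)^2>0$, one finds that $\lim_{T,S}\frac{1}{TS}\sum_{t,s}d_{t,s}(\theta)^{2}$ equals $\tfrac12$ times the sum of $(A_j^0)^2+(B_j^0)^2$ over true components $j$ whose frequency pair is matched by no component of $\theta$, plus $\tfrac12$ times the analogous sum over unmatched components of $\theta$, plus $\tfrac12\sum_{(j,k)\ \mathrm{matched}}\big((A_j^0-A_k)^2+(B_j^0-B_k)^2\big)$, which is strictly positive unless the frequency pairs of $\theta$ coincide with those of $\theta_0$ and all matched amplitudes agree, i.e.\ unless $\theta=\theta_0$. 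Since $\theta^{\ast}\neq\theta_0$, this limit is some $c_0>0$ at $\theta^{\ast}$; by continuity of the finitely many possible right-hand expressions in the amplitudes, together with the elementary bound $\tfrac12(a^2+b^2)\ge\tfrac14(a-b)^2$ (which shows that sliding a frequency off a matched value cannot destroy positivity), the estimate $\liminf\frac1{TS}\sum d_{t,s}(\theta)^2\ge c_0/2$ persists on some neighbourhood $U$ of $\theta^{\ast}$ (the countable exceptional set of Lemma~\ref{lemma:1} being negligible, exactly as in the $p=1$ proof). Along the chosen subsequence $\hat\theta_{T,S}\in U$ eventually, whence $Q_{T,S}(\hat\theta_{T,S})-Q_{T,S}(\theta_0)\ge\alpha\cdot\frac1{TS}\sum_{t,s}d_{t,s}(\hat\theta_{T,S})^2-\sup_{\Theta}|R_{T,S}|\ge\alpha c_0/2-o(1)>0$ for all large $T,S$ --- the desired contradiction. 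Therefore $\hat\theta_{T,S}\xrightarrow{a.s.}\theta_0$.

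The main obstacle is precisely this separation step: unlike the single-component case, one must verify that the limiting criterion function distinguishes $\theta_0$ from \emph{every} competing parameter value, which requires bookkeeping of all partial-matching patterns among the $p$ frequency pairs and rests essentially on the vanishing of the cross-frequency Dirichlet-type sums in Lemma~\ref{lemma:1}. The accompanying technical point is that the positive lower bound must be made locally uniform in $\theta$, so that it can be evaluated at the moving minimiser $\hat\theta_{T,S}$ rather than at a fixed $\theta$; this is delivered by the equicontinuity / uniform-SLLN estimate for $R_{T,S}$ and for the $h$-sum.
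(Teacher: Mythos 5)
Your proposal is correct and follows essentially the same route as the paper: decompose $Q_{T,S}(\theta)-Q_{T,S}(\theta_0)$ into a centered fluctuation term that converges to zero uniformly over the compact $\Theta$ plus a deterministic limit, and show that limit is uniquely minimised at $\theta_0$; your subsequence-contradiction ending is just the content of the Lemma~2.2 of \cite{white} that the paper invokes. The only substantive difference is that you carry out explicitly the cross-component orthogonality bookkeeping (via Lemma~\ref{lemma:1}) needed to verify identifiability for $p>1$, a step the paper's proof of Theorem~\ref{theorem:3} leaves entirely to ``proceeding similarly'' from the one-component case.
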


\begin{proof}
    Please refer to Appendix \ref{Appendix:3}.
\end{proof}

We have the following result for asymptotic normality for the LAD estimators of a multi component model.
\begin{theorem}\label{theorem:4}
 Let $$R(\hat{\theta}_{T,S}) = \left( R_{1}(\hat{\theta}_{T,S;1}),\cdots, \\R_{q}(\hat{\theta}_{T,S;p})\right),$$ 
 where,
 \begin{align*}
 \begin{split}
 R_{r}(\hat{\theta}_{T,S;r}) & = \left(\sqrt{TS}(\hat{A}_{T,S;r}-A_{r}^{0}),\sqrt{TS}(\hat{B}_{T,S;r}-B_{r}^{0}), \right. \\
 & \qquad\qquad \hspace{-0.5cm} \left. T^{\frac{3}{2}}S^{\frac{1}{2}}(\hat{\lambda}_{T,S;r}-\lambda_{r}^{0}),S^{\frac{3}{2}}T^{\frac{1}{2}}(\hat{\mu}_{T,S;r}-\mu_{r}^{0})\right),\\
 \end{split} 
 \end{align*} 
 $r=1,\ldots, p$. Under Assumption \ref{as1} and Assumption \ref{as2}, $R(\hat{\theta}_{T,S})$  converges in distribution to $N_{4p}\left(0,\frac{1}{4g^{2}(0)}\Sigma^{-1}\right)$ as $min\{T,S\} \rightarrow \infty$; where $\Sigma = ((\Sigma_{m,n})), \: m,n \in \{1,\ldots, p\}$ and
\begin{equation*}
    \Sigma_{m,n} = 
    \begin{cases}
    0, & ~ \text{if}~ m\neq n\\\\
    \begin{pmatrix}
\frac{1}{2} & 0 & \frac{B_{m}^{0}}{4} & \frac{B_{m}^{0}}{4}\\
0 & \frac{1}{2} & -\frac{A_{m}^{0}}{4} & -\frac{A_{m}^{0}}{4}\\
\frac{B_{m}^{0}}{4} & -\frac{A_{m}^{0}}{4} & \frac{{A^{0}_{m}}^2 + {B^{0}_{m}}^2}{6} & \frac{{A^{0}_{m}}^2 + {B^{0}_{m}}^2}{8} \\
\frac{B_{m}^{0}}{4} & -\frac{A_{m}^{0}}{4} & \frac{{A^{0}_{m}}^2 + {B^{0}_{m}}^2}{8} & \frac{{A^{0}_{m}}^2 + {B^{0}_{m}}^2}{6}
\end{pmatrix}, & ~ \text{if}~ m=n.
    \end{cases}
\end{equation*}
\end{theorem}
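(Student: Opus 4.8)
The plan is to run the argument behind Theorem~\ref{theorem:2} (Appendix~\ref{Appendix:2}) on the full $4p$-dimensional estimating equation, and to isolate the one genuinely new ingredient: the asymptotic decoupling of the $p$ parameter blocks, which stems from the distinctness of the $p$ frequency pairs. Write $f(t,s;\theta)=\sum_{k=1}^{p}\big(A_k\cos(\lambda_k t+\mu_k s)+B_k\sin(\lambda_k t+\mu_k s)\big)$, $\psi=\mathrm{sgn}$, $U_{T,S}(\theta)=\sum_{t,s}\psi\big(y(t,s)-f(t,s;\theta)\big)\nabla_\theta f(t,s;\theta)$, and $D_{T,S}=\mathrm{diag}(D^{(1)},\dots,D^{(p)})$ with each $D^{(r)}=\mathrm{diag}(\sqrt{TS},\sqrt{TS},T^{3/2}S^{1/2},S^{3/2}T^{1/2})$, so that $R(\hat\theta_{T,S})=D_{T,S}(\hat\theta_{T,S}-\theta_0)$. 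By Theorem~\ref{theorem:3}, $\hat\theta_{T,S}\to\theta_0$ a.s., and since $\theta_0$ is interior (Assumption~\ref{as1}), for $\min\{T,S\}$ large $\hat\theta_{T,S}$ lies in a neighbourhood on which $Q_{T,S}$ is differentiable in $\theta$ off a Lebesgue-null set, whence $U_{T,S}(\hat\theta_{T,S})=0$ up to a measure-zero ambiguity handled as in the one-component proof. The strategy is the usual sandwich: linearize $U_{T,S}(\hat\theta_{T,S})=0$ about $\theta_0$, then read off the limit from a CLT for the score and a law of large numbers for the curvature.

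For the score, $U_{T,S}(\theta_0)=\sum_{t,s}\psi(\epsilon(t,s))\nabla f(t,s;\theta_0)$ is a weighted sum of bounded i.i.d.\ variables $\psi(\epsilon(t,s))$ with mean $2G(0)-1=0$ and variance $1$ (Assumption~\ref{as2}). Its normalized covariance is $D_{T,S}^{-1}\big(\sum_{t,s}\nabla f\,\nabla f^{\top}\big)D_{T,S}^{-1}$, and Lemma~\ref{lemma:1}, applied entrywise to these normalized sums of products of sinusoids and powers of $t,s$, shows it converges to the matrix $\Sigma$ of the statement — the diagonal blocks being exactly those computed in Theorem~\ref{theorem:2}. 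A Lyapunov CLT for weighted sums then yields $D_{T,S}^{-1}U_{T,S}(\theta_0)\xrightarrow{d}N_{4p}(0,\Sigma)$. For the curvature I would not differentiate the non-smooth $U_{T,S}$ but use its mean: from $\mathbb{E}[\psi(\epsilon+\delta)]=1-2G(-\delta)$ one gets $\mathbb{E}[U_{T,S}(\theta)]=\sum_{t,s}\big(1-2G(f(t,s;\theta)-f(t,s;\theta_0))\big)\nabla f(t,s;\theta)$, and a Taylor expansion about $\theta_0$ — legitimate because $g'$ exists and is bounded (Assumption~\ref{as2}) — gives $D_{T,S}^{-1}\mathbb{E}\big[U_{T,S}(\theta_0+D_{T,S}^{-1}u)-U_{T,S}(\theta_0)\big]=-2g(0)\,\Sigma\,u+o(1)$, uniformly for $u$ in compacts, with the same $\Sigma$ reappearing through Lemma~\ref{lemma:1}. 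The remaining ingredient is a stochastic equicontinuity (asymptotic-linearity) statement, $\sup_{\|u\|\le C}\big\|D_{T,S}^{-1}\big(U_{T,S}(\theta_0+D_{T,S}^{-1}u)-U_{T,S}(\theta_0)-\mathbb{E}[U_{T,S}(\theta_0+D_{T,S}^{-1}u)-U_{T,S}(\theta_0)]\big)\big\|=o_p(1)$. Combining these with $U_{T,S}(\hat\theta_{T,S})=0$ gives $0=D_{T,S}^{-1}U_{T,S}(\theta_0)-2g(0)\Sigma R(\hat\theta_{T,S})+o_p(1+\|R(\hat\theta_{T,S})\|)$, hence $R(\hat\theta_{T,S})=\tfrac{1}{2g(0)}\Sigma^{-1}D_{T,S}^{-1}U_{T,S}(\theta_0)+o_p(1)\xrightarrow{d}N_{4p}\big(0,\tfrac{1}{4g^2(0)}\Sigma^{-1}\big)$.

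The block structure $\Sigma_{m,n}=0$ for $m\neq n$ — the only feature absent from the one-component analysis — follows from the trigonometric estimates underlying Lemma~\ref{lemma:1}: for $m\neq n$ the relevant entries of $D_{T,S}^{-1}\nabla f$ pair, after product-to-sum identities, into normalized sums of sinusoids at the combined frequencies $(\lambda_m^0\pm\lambda_n^0,\mu_m^0\pm\mu_n^0)$, all nonzero because the $p$ pairs $(\lambda_k^0,\mu_k^0)$ are distinct, so parts~3 and~4 of Lemma~\ref{lemma:1} force every cross-block term (in both the score covariance and the curvature matrix) to vanish. Both matrices are thus block-diagonal, the $p$ sub-vectors $R_r(\hat\theta_{T,S;r})$ become asymptotically independent, and each has the $4$-dimensional limit law of Theorem~\ref{theorem:2} with $(A^0,B^0)$ replaced by $(A_r^0,B_r^0)$.

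I expect the stochastic-equicontinuity step to be the main obstacle. The estimating function $U_{T,S}$ is non-smooth, and while the amplitude directions could in principle be controlled through convexity in the $(A_k,B_k)$, the frequency directions are not convex, so one must directly bound the oscillation of $\sum_{t,s}\big(\psi(\epsilon(t,s)+r_{t,s}(u))-\psi(\epsilon(t,s))\big)D_{T,S}^{-1}\nabla f(t,s;\theta_0)$, with $r_{t,s}(u)=f(t,s;\theta_0)-f(t,s;\theta_0+D_{T,S}^{-1}u)$, uniformly over a shrinking box in $u$. This is a chaining/maximal-inequality argument that uses the boundedness of $g$ to control the conditional variances of the increments and the polynomial-in-$(t,s)$ growth of $\nabla f$, and it is precisely at this point that the normalizations $\sqrt{TS}$, $T^{3/2}S^{1/2}$, $S^{3/2}T^{1/2}$ in $D_{T,S}$ are pinned down as the only rates that keep every term of the right order.
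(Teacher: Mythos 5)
Your proposal is correct in outline but follows a genuinely different technical route from the paper. The paper proves Theorem~\ref{theorem:4} only by reference to the argument of Theorem~\ref{theorem:2} (Appendix~\ref{Appendix:2}), and that argument is a \emph{smoothing} one: $|x|$ is replaced by a $C^{2}$ surrogate $\delta_{T,S}$ with a tuning sequence $\beta_{T,S}$, the smoothed minimizer $\theta'_{T,S}$ is shown to be within $o_p$ of the LAD estimator via $\sup_\theta TS\{Q^{*}_{T,S}-Q_{T,S}\}=o_p(1)$, and classical Taylor-expansion asymptotics are applied to $Q^{*}_{T,S}$ (Hessian limit $2g(0)\Sigma$, Lindeberg--Feller plus Cram\'er--Wold for the gradient, Slutsky). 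You instead work directly with the non-smooth sign-based estimating equation $U_{T,S}$ and run the standard Z-estimator sandwich: a CLT for $D_{T,S}^{-1}U_{T,S}(\theta_0)$, a deterministic expansion of $\mathbb{E}[U_{T,S}]$ using $\mathbb{E}[\psi(\epsilon+\delta)]=1-2G(-\delta)$, and a stochastic-equicontinuity step. What the paper's smoothing buys is the avoidance of any empirical-process machinery, at the price of the $\beta_{T,S}$ bookkeeping; what your route buys is a cleaner identification of where $g(0)$ and $\Sigma$ enter, at the price of the chaining/maximal-inequality argument, which you correctly flag as the hard step but do not execute. Your proposal also makes explicit the one piece of content that is genuinely new in the multi-component case and that the paper leaves entirely implicit: the asymptotic block-diagonality of both the score covariance and the curvature, obtained by product-to-sum identities and parts 3--5 of Lemma~\ref{lemma:1} applied at the combined frequencies $(\lambda_m^0\pm\lambda_n^0,\mu_m^0\pm\mu_n^0)$.

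Two caveats you should record. First, Lemma~\ref{lemma:1} is stated for frequencies in $(0,\pi)\times(0,\pi)$, whereas the difference frequencies $\lambda_m^0-\lambda_n^0$, $\mu_m^0-\mu_n^0$ may be negative, zero in one coordinate, or fall outside that range; one needs the ``except for countably many points'' caveat together with the explicit (and, in Assumption~\ref{as1}, unstated) requirement that the pairs $(\lambda_k^0,\mu_k^0)$ be distinct. Second, your assertion that $U_{T,S}(\hat\theta_{T,S})=0$ ``up to a measure-zero ambiguity handled as in the one-component proof'' does not match the paper: the one-component proof never faces this issue because it differentiates the smoothed $Q^{*}_{T,S}$, so if you stay on the non-smooth route you must separately bound $\|D_{T,S}^{-1}U_{T,S}(\hat\theta_{T,S})\|$ (e.g.\ by the number of exactly-interpolated residuals times $\max_{t,s}\|D_{T,S}^{-1}\nabla f\|$, which is $O(T^{-1/2}S^{-1/2})$ per term) or else adopt the paper's smoothing at that point.
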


\begin{proof}
The proof follows along the similar lines of the proof of Theorem \ref{theorem:2}. 
\end{proof}
\section{Simulation studies}
\label{sec:5}
In this section we present the  simulation studies performed to observe the performance of LAD estimators for different sample sizes $(T,S)$ for 2-dimensional chirp model and also to compare it's performance with the non-robust LSE under the presence of heavy tailed noise. We first consider a one component simulation model:
\begin{equation}
     y(t,s) = A^0 \cos(\lambda^0 t + \mu^0 s) + B^0 \sin(\lambda^0 t + \mu^0 s) + \epsilon(t,s)~~t\in[T], s\in[S]. \label{onecomp}
\end{equation}
We take the true values
for the model parameters as $A^0 = 2.4,B^0 = 1.4, \lambda^{0} = 0.4, \mu^{0} = 0.6$. In the simulations, we considered the error distributions to be (i) $N(0,0.1^2)$, (ii) a student's $t$ distribution with one degree of freedom and (iii) a slash normal distribution. Note that the second and the third noise scenarios represent heavy tailed noise.  Further, under the normal noise, the LSE is the MLE.  In order to find the LAD estimators and LSE, we have used Nelder-Mead downhill simplex algorithm for numerical optimization of the corresponding objective function. We have reported the average estimate (AE), mean square error(MSE) of the LAD estimators and LSE, over 1000 simulation runs.  For comparison, we also reported the theoretical asymptotic variance (AsyVar-LAD) of the LAD estimators, derived in the paper. The results for model (\ref{onecomp}) are presented in Tables (\ref{tab:1}--\ref{tab:3}).
We also performed simulations on the following 2-component model
\begin{equation}
     y(t,s) = \sum_{k=1}^2 \left(A^0_k \cos(\lambda^0_k t + \mu^0_k s) + B^0_k \sin(\lambda^0_k t + \mu^0_k s) \right)+ \epsilon(t,s)~~t\in[T], s\in[S]. \label{2comp}
\end{equation}
For the model (\ref{2comp}), we take $A_1^0=4.2$, $A_2^0=3.3$, $B_1^0=3.6$, $B_2^0=2.7$, $\lambda_1^0=1.1$, $\lambda_2^0=0.24$, $\mu_1^0=1.9$ and $\mu_2^0=0.36$.  Choice of noise distribution scenarios for (\ref{2comp}) are (i) $N(0,1)$, (ii) a student's $t$ distribution with one degree of freedom and (iii) a slash normal distribution.  We take the values of  $T, S$ as $25,50, 100, 200$ and $300$.  AE and MSE for the LAD and LSE over 1000 simulation runs are presented in Tables \ref{tab:4}--\ref{tab:9}. 
\begin{table}[htb]
    \centering
    \caption{Results for model (\ref{onecomp}) with $N(0,0.1^2)$ noise} \label{tab:1}
    \begin{tabular}{|c||c||c||c||c||c|}
    \hline
      \textbf{(T,S)} & \textbf{} & $A^0=2.4$ & $B^0=1.4$ & $\lambda^0=0.4$ & $\mu^0=0.6$ \\\hline
       (25,25) & LAD AE & 2.4 & 1.393 & 0.399 & 0.6\\ 
               & LAD MSE & 1.254E-04& 2.748E-04 & 1.253E-07& 1.143E-07\\
               & AsyVar-LAD & 1.268E-03 &2.753E-03 & 1.250E-06& 1.250E-06\\
               & LSE AE & 2.4 &1.399 &0.399 & 0.599\\
               & LSE MSE & 8.153E-05 & 1.835E-04 & 7.741E-08 &7.537E-08\\ \hline
       (50,50) & LAD AE &2.399 &1.399 & 0.4000 & 0.599 \\ 
               & LAD MSE & 3.330E-05 &7.190E-05 &7.635E-09 &7.988E-09\\
               & AsyVar-LAD & 3.171E-04 & 6.882E-04 &7.813E-08 &7.813E-08\\
               & LSE AE &2.399 & 1.399 & 0.4 & 0.6\\
               & LSE MSE &2.095E-05 &4.641E-05 &4.962E-09 &5.128E-09\\ \hline
       (75,75) & LAD AE & 2.399 & 1.4 & 0.399 & 0.6\\ 
               & LAD MSE &1.403E-05 & 3.136E-05 &1.534E-09 &1.520E-09\\
               & AsyVar-LAD & 1.409E-04 &3.059E-04 &1.543E-08 &1.543E-08\\
               & LSE AE & 2.4 & 1.399 & 0.399 & 0.6\\
               & LSE MSE &8.713E-06 &1.985E-05 &9.399E-10 &9.817E-10\\ \hline
         (150,150) & LAD AE & 2.4 & 1.4 & 0.4 & 0.599\\ 
               & LAD MSE &3.395E-06 &7.874E-06 &9.698E-11 &9.665E-11\\
               & AsyVar-LAD & 3.523E-05 &7.647E-05 & 9.646E-10 &9.646E-10\\
               & LSE AE & 2.399 & 1.4 & 0.4 & 0.6\\
               & LSE MSE &2.129E-06 &5.169E-06 &5.993E-11 &6.285E-11\\ \hline 
        (300,300) & LAD AE & 2.4 & 1.399 & 0.4 & 0.599\\ 
               & LAD MSE &8.303E-07 &1.712E-06 &5.708E-12 &5.643E-12\\
               & AsyVar-LAD & 8.808E-06 &1.912E-05 & 6.029E-11 &6.029E-11\\
               & LSE AE & 2.4 & 1.4 & 0.4 & 0.6\\
               & LSE MSE &5.007E-07 &1.178E-06 &3.63E-12 &3.521E-12\\ \hline 
    \end{tabular} 
     
\end{table}

\begin{table}[htb]
    \centering
    \caption{Results for model (\ref{onecomp}) with  $t_{(1)}$ noise} \label{tab:2}
    \begin{tabular}{|c||c||c||c||c||c|}
    \hline
      \textbf{(T,S)} & \textbf{} & $A^0=2.4$ & $B^0=1.4$ & $\lambda^0=0.4$ & $\mu^0=0.6$ \\\hline
       (25,25) & LAD AE & 2.395 & 1.385 & 0.399 & 0.6 \\ 
               & LAD MSE & 1.939E-02 & 4.059E-02 & 1.712E-05 & 1.855E-05\\
               & AsyVar-LAD & 1.992E-02 & 4.324E-02 &
               1.964E-05 &1.964E-05\\
               & LSE AE & 6.273 & 3.369 &0.129 & 0.806\\
               & LSE MSE & 959.629 & 2307.294 & 97.391 & 105.001\\ \hline
       (50,50) & LAD AE & 2.401 & 1.396 & 0.399 & 0.599 \\ 
               & LAD MSE & 5.246E-03 & 1.146E-02 & 1.209E-06 &1.182E-06\\
               & AsyVar-LAD &4.981E-03 &1.081E-02 & 1.227E-06 &
               1.227E-06\\
               & LSE AE & 7.637 & 2.815 & 0.995 & 6.908E-02\\
               & LSE MSE & 2041.593 & 1711.425 & 187.92 & 148.399\\ \hline
    (75,75) & LAD AE & 2.402 & 1.398 & 0.399 & 0.6 \\ 
               & LAD MSE & 2.381E-03 & 4.857E-03 & 2.690E-07 &2.329E-07\\
               & AsyVar-LAD &2.214E-03 &4.805E-03 &2.424E-07 &2.424E-07\\
               & LSE AE & 5.357 & 2.607 & 0.418 & 0.543\\
               & LSE MSE & 223.681 & 172.408 & 16.792 & 59.392\\ \hline
       (150,150) & LAD AE & 2.399 & 1.4 & 0.4 & 0.6 \\ 
               & LAD MSE & 5.638E-04 & 1.124E-03 & 1.398E-08 & 1.485E-08\\
               & AsyVar-LAD & 5.534E-04 & 1.201E-03 & 1.515E-08 &1.515E-08\\
               & LSE AE & 4.373 & 2.064 & 0.427 & 0.611\\
               & LSE MSE & 63.557 & 22.072 & 0.12 & 6.227E-02\\ \hline
        (300,300) & LAD AE & 2.4 & 1.4 & 0.4 & 0.6 \\ 
               & LAD MSE & 1.396E-04 & 2.94E-04 & 8.4E-10 & 9.406E-10\\
               & AsyVar-LAD & 1.384E-04 & 3.003E-04 & 9.469E-10 &9.469E-10\\
               & LSE AE & 3.069 & 1.588 & 0.418 & 0.614\\
               & LSE MSE & 11.437 & 3.13 & 0.009 & 0.01\\ \hline
    \end{tabular}
    
    \end{table}

\begin{table}[htb]
    \centering
    \caption{Results for model (\ref{onecomp}) with slash normal noise}
    \label{tab:3}
    \begin{tabular}{|c||c||c||c||c||c|}
    \hline
      \textbf{(T,S)} & \textbf{} & $A^0=2.4$ & $B^0=1.4$ & $\lambda^0=0.4$ & $\mu^0=0.6$ \\\hline
       (25,25) & LAD AE & 2.388 & 1.39 & 0.399 & 0.6 \\ 
               & LAD MSE & 5.264E-02&
               0.114 & 5.265E-05 &5.174E-05\\
               & AsyVar-LAD &5.073E-02 &0.110 & 5.001E-05 &5.001E-05\\
               & LSE AE & 9.535 & 3.18 & 0.878 & -0.696\\
               & LSE MSE & 2850.278 & 1607.34 & 2110.14 & 2268.6\\ \hline
       (50,50) & LAD AE & 2.399 & 1.389 & 0.399 & 0.6 \\ 
               & LAD MSE & 1.125E-02 &2.705E-02 &2.969E-06 &2.966E-06\\
               & AsyVar-LAD & 1.268E-02& 2.753E-02 &3.125E-06 &3.125E-06\\
               & LSE AE & 9.364 & 0.11 & 7.818E-02 & 0.583\\
               & LSE MSE & 6537.038 & 2596.877 & 267.709 & 51.612\\ \hline
       (75,75) & LAD AE & 2.398 & 1.399 & 0.4 & 0.599 \\ 
               & LAD MSE & 5.589E-03&1.187E-02 &6.196E-07 &5.757E-07\\
               & AsyVar-LAD &5.637E-03 &1.224E-02 &6.173E-07 &6.173E-07\\
               & LSE AE & 8.533 & 2.257 & 0.379 & 0.248\\
               & LSE MSE & 1449.823 & 224.178 & 55.204 & 154.336\\ \hline
       (150,150) & LAD AE & 2.401 & 1.399 & 0.399 & 0.599 \\ 
               & LAD MSE & 1.417E-03 &3.105E-03 &3.833E-08 &3.809E-08\\
               & AsyVar-LAD &1.409E-03 &3.059E-03 &3.858E-08 &3.858E-08\\
               & LSE AE & 4.598 & 2.238 & 0.421 & 0.572\\
               & LSE MSE & 45.089 & 32.407 & 0.131 & 1.923\\ \hline
     (300,300) & LAD AE & 2.399 & 1.4 & 0.4 & 0.6 \\ 
               & LAD MSE & 3.067E-04 &6.979E-04 &2.186E-09 &2.459E-09\\
               & AsyVar-LAD &3.523E-04 &7.647E-04 &2.412E-09 &2.412E-09\\
               & LSE AE & 3.270 & 1.838 & 0.423 & 0.616\\
               & LSE MSE & 8.740 & 6.705 & 0.012 & 0.013\\ \hline
    \end{tabular}
    \end{table}

\begin{table}[htb]
    \centering
    \caption{Frequency parameter results for model (\ref{2comp}) with  $N(0,1)$ noise}
    \label{tab:4}
    \begin{tabular}{|c||c||c||c||c||c|}
    \hline
      \textbf{(T,S)} & \textbf{} &  $\lambda_1^0=1.1$  & $\mu_1^0=1.9$ &$\lambda_2^0=0.24$ &$\mu_2^0=0.36$  \\  \hline 
       (25,25) & LAD AE &  1.100 	&  1.900 & 	0.240& 0.360  \\ 
               & LAD MSE & 2.566E-06  & 3.309E-06  &  6.067E-06 & 5.146E-06 \\
               
               & AsyVar-LAD & 3.153E-06& 3.153E-06 &5.308E-06 & 5.308E-06
               \\
               & LSE AE &1.100 	&  1.900 & 	0.240& 0.360  \\
               & LSE MSE & 1.877E-06  &  2.205E-06 &  3.644E-06 & 3.552E-06 \\ \hline 
       (50,50) & LAD AE &1.100 	&  1.900 & 	0.240& 0.360   \\ 
               & LAD MSE & 1.941E-07 & 1.934E-07  & 3.249E-07  & 3.771E-07 \\
               
               & AsyVar-LAD & 1.971E-07 & 1.971E-07 & 3.317E-07 & 3.317E-07
               \\
               & LSE AE &1.100 	&  1.900 & 	0.240& 0.360   \\
               & LSE MSE & 1.941E-07  & 1.934E-07  &  3.249E-07 & 3.771E-07 \\ \hline
    (100,100) &  LAD AE &1.100 	&  1.900 & 	0.240& 0.360     \\ 
               & LAD MSE & 1.250E-08  & 1.094E-08  & 2.144E-08  & 2.102E-08 \\
               
               & AsyVar-LAD & 1.231E-08&1.231E-08& 2.073E-08 &  2.073E-08
               \\
               & LSE AE &1.100 	&  1.900 & 	0.240& 0.360    \\
               & LSE MSE & 8.458E-09  & 7.553E-09  &  1.327E-08 & 1.267E-08 \\ \hline
       (200,200) &  LAD AE &1.100 	&  1.900 & 	0.240& 0.360  \\ 
               & LAD MSE & 8.327E-10  &  7.416E-10 & 1.329E-09  & 1.167E-09 \\
               
               & AsyVar-LAD & 7.699E-10& 7.699E-10 &1.296E-09&  1.296E-09
               \\
               & LSE AE &1.100 	&  1.900 & 	0.240& 0.360  \\
               & LSE MSE &  5.325E-10 &  4.871E-10 & 8.102E-10  & 6.948E-10 \\ \hline
    (300,300) &  LAD AE &   	1.100 	&  1.900 & 	0.240& 0.360   \\ 
               & LAD MSE & 1.507E-10  &  1.425E-10 &  2.514E-10 & 2.616E-10 \\
               
               & AsyVar-LAD &1.521E-10 &1.521E-10& 2.560E-10 & 2.560E-10
               \\
               & LSE AE &   1.100 	&  1.900 & 	0.240& 0.360  \\
               & LSE MSE & 9.756E-11  &  8.974E-11 & 1.423E-10  &  1.696E-10\\ \hline
    \end{tabular}
    
\end{table}

\begin{table}[htb]
    \centering
    \caption{Amplitude parameter results for model (\ref{2comp}) with  $N(0,1)$ noise}
    \label{tab:5}
    \begin{tabular}{|c||c||c||c||c||c|}
    \hline
      \textbf{(T,S)} & \textbf{} & $A_1^0=4.2$ & $B_1^0=3.6$ & $A_2^0=3.3$ & $B_2^0=2.7$ \\\hline
       (25,25) &  LAD AE &  4.206 	& 3.596  & 	3.290& 2.705  \\ 
               & LAD MSE & 1.551E-02  & 2.087E-02  &  1.755E-02 & 2.627E-02 \\
               
               & AsyVar-LAD & 1.779E-02 & 2.241E-02 & 1.712E-02 &2.309E-02
               \\
               & LSE AE &  4.207 & 3.598  &  3.293 & 2.706\\
               & LSE MSE & 1.109E-02  & 1.382E-02  &  1.235E-02 & 1.736E-02 \\ \hline
       (50,50) &  LAD AE &  4.202 	&  3.595 & 3.302	&  2.700 \\ 
               & LAD MSE &  4.283E-03 & 5.620E-03  & 4.770E-03  & 6.500E-03 \\
               
               & AsyVar-LAD & 4.449E-03 & 5.603E-03& 4.280E-03 & 5.773E-03
               \\
               & LSE AE & 4.202  & 3.597  &  3.302 & 2.700 \\
               & LSE MSE & 2.630E-03  & 2.964E-03  &  3.347E-03 & 3.856E-03 \\ \hline
          (100,100) &  LAD AE &   	4.199& 3.601  & 3.298	& 2.702  \\ 
               & LAD MSE & 1.128E-03  & 1.270E-03  & 1.118E-03  & 1.394E-03 \\
               
               & AsyVar-LAD & 1.112E-03& 1.401E-03& 1.070E-03 &1.443E-03
               \\
               & LSE AE &  4.200 & 3.600  &  3.298 &  2.701\\
               & LSE MSE & 7.567E-04  & 8.999E-04  &  6.842E-04 & 9.033E-04 \\ \hline
           (200,200) &  LAD AE &  4.200 	&  3.601 & 	3.300&  2.701 \\ 
               & LAD MSE &  2.578E-04 &  3.436E-04 & 2.530E-04  & 3.706E-04 \\
               
               & AsyVar-LAD & 2.781E-04 &3.501E-04 & 2.675E-04 & 3.608E-04
               \\
               & LSE AE &  4.201 & 3.600  &  3.300 & 2.700 \\
               & LSE MSE & 1.811E-04  & 2.262E-04  & 1.605E-04  & 2.240E-04 \\ \hline
               (300,300) &  LAD AE &  4.200 	& 3.600  & 	3.300&  2.700 \\ 
               & LAD MSE & 1.245E-04  & 1.528E-04  & 1.218E-04  &  1.603E-04\\
               
               & AsyVar-LAD & 1.236E-04& 1.556E-04& 1.188E-04 & 1.603E-04
               \\
               & LSE AE &   4.200 	& 3.600  & 	3.300&  2.700  \\
               & LSE MSE & 7.681E-05  &  1.023E-04 &  7.373E-05 & 9.455E-05 \\ \hline  
    \end{tabular}
        
\end{table}

\begin{table}[htb]
    \centering
    \caption{Frequency parameter results for model (\ref{2comp}) with  $t_{(1)}$ noise}
    \label{tab:6}
    \begin{tabular}{|c||c||c||c||c||c|}
    \hline
      \textbf{(T,S)} & \textbf{} &  $\lambda_1^0=1.1$  & $\mu_1^0=1.9$ &$\lambda_2^0=0.24$ &$\mu_2^0=0.36$ \\\hline
       (25,25) & LAD AE & 1.100	& 1.900 & 0.240	& 0.360 \\ 
       
               & LAD MSE &   5.395E-06  	& 4.760E-06 & 7.986E-06 &	8.023E-06 \\
               & AsyVar-LAD & 4.954E-06 & 4.954E-06 & 8.338E-06 &
               8.338E-06 \\
               & LSE AE & 0.984	 & 2.061 & 0.143 &0.396 \\
               & LSE MSE & 10.496 & 13.402 & 2.264 & 1.247 \\ \hline
       (50,50) & LAD AE &  1.100	& 1.900 &0.240	& 0.360  \\ 
               & LAD MSE & 2.741E-07 & 2.921E-07 & 5.110E-07 & 5.002E-07\\
               
               & AsyVar-LAD & 3.096E-07& 3.096E-07 & 5.211E-07 & 5.211E-07 
               \\
               & LSE AE & -2.271 & 3.483 & -0.102 & 0.527\\
               & LSE MSE & 5447.033 & 1206.998 & 44.346 & 9.943\\ \hline
    (100,100) & LAD AE &   1.100	& 1.900 &0.240	& 0.360   \\ 
               & LAD MSE & 2.051E-08  & 1.894E-08  &  3.471E-08 & 3.003E-08\\
               & AsyVar-LAD & 1.935E-08 & 1.935E-08 & 3.257E-08 & 3.257E-08 \\
               & LSE AE & 0.963  & 2.030  &  0.209 & 0.352 \\
               & LSE MSE & 2.938  &  4.519 &  0.473 & 0.709 \\ \hline
       (200,200) & LAD AE &  1.100	& 1.900 &0.240	& 0.360   \\ 
               & LAD MSE & 1.11E-09 &  1.25E-09  & 1.87E-09  & 2.19E-09\\
               & AsyVar-LAD & 7.699E-10 &  7.699E-10 &  1.296E-09 & 1.296E-09\\
               & LSE AE &  1.067 &  1.894 & 0.210  & 0.381 \\
               & LSE MSE & 12.693  & 19.110  & 0.556  & 0.602  \\ \hline
    (300,300) & LAD AE &   1.100	& 1.900 &0.240	& 0.360   \\ 
               & LAD MSE & 2.414E-10  &  2.372E-10 & 3.822E-10  & 4.567E-10 \\
               & AsyVar-LAD & 2.389E-10 & 2.389E-10  & 4.021E-10  & 4.021E-10 \\
               & LSE AE & 1.398  &  1.314 & 0.112  &  0.552\\
               & LSE MSE & 28.622  &  93.677 &  7.008 & 24.612 \\ \hline
    \end{tabular}
    
\end{table}

\begin{table}[htb]
    \centering
    \caption{Amplitude parameter results for model (\ref{2comp}) with  $t_{(1)}$ noise}
    \label{tab:7}
    \begin{tabular}{|c||c||c||c||c|c|}
    \hline
      \textbf{(T,S)} & \textbf{} & $A_1^0=4.2$ & $B_1^0=3.6$ & $A_2^0=3.3$ & $B_2^0=2.7$ \\\hline
       (25,25) & LAD AE &  4.193&3.602&3.298 & 2.694 \\ 
       
               & LAD MSE & 2.94E-02& 3.65E-02&2.78E-02& 3.43E-02  \\
               & AsyVar-LAD & 2.796E-02 &  3.531E-02& 2.689E-02
                &3.627E-02\\
               & LSE AE &  6.183 & 5.145  & 5.617  &  0.836\\
               & LSE MSE &  473.31  &  404.887 & 536.017  & 487.241  \\ \hline
       (50,50) & LAD AE & 4.205  & 3.592& 3.299 & 2.699  \\ 
       
               & LAD MSE & 6.213E-03 & 8.527E-03 &6.824E-03 &  8.532E-03 \\
               & AsyVar-LAD & 4.449E-03 & 5.603E-03 &4.280E-03
                & 5.773E-03\\
               & LSE AE &  19.509  &   -8.423 &  29.918  & -0.346  \\
               & LSE MSE & 8.21E+04    & 9.012E+04   &  2.954E+05  & 1.045E+03   \\ \hline
          (100,100) & LAD AE &  4.199  & 3.601 & 3.301  &  2.699  \\ 
       
               & LAD MSE &  1.771E-03 & 2.236E-03  & 1.797E-03 &  2.362E-03  \\
               & AsyVar-LAD & 1.747E-03 & 2.200E-03 & 1.681E-03
                & 2.267E-03\\
               & LSE AE &  7.937   & 6.727    &  5.519   &   -0.761 \\
               & LSE MSE &   1619.304   & 1970.075    &  708.635   & 3444.44    \\ \hline  
           (200,200) & LAD AE &  4.2  & 3.6 & 3.299  &  2.7 \\ 
       
               & LAD MSE &  4.730E-04 & 5.530E-04  & 4.570E-04 & 5.650E-04   \\
               & AsyVar-LAD & 4.368E-04 & 5.501E-04 & 4.202E-04
                & 5.667E-04\\
               & LSE AE &   8.885  &  7.377   &   4.806  &   4.806 \\
               & LSE MSE &   2483.897   &  3151.481   &  1151.603   & 2606.801    \\ \hline   
               (300,300) & LAD AE &    4.2  & 3.6 & 3.3  &  2.7  \\ 
       
               & LAD MSE &  2.039E-04 &  2.447E-04 & 1.647E-04 & 2.404E-04   \\
               & AsyVar-LAD & 1.941E-04 & 2.445E-04 & 1.867E-04
                & 2.519E-04\\
               & LSE AE &   13.656  &   2.466  &  16.266   &  -3.599  \\
               & LSE MSE &  10672.010    & 1511.217    &  79871.000   &   14605.760  \\ \hline   
    \end{tabular}
    
\end{table}

\begin{table}[htb]
    \centering
    \caption{Frequency parameter results for model (\ref{2comp}) with  Slash normal noise}
    \label{tab:8}
    \begin{tabular}{|c||c||c||c||c||c|}
    \hline
      \textbf{(T,S)} &  &  $\lambda_1^0=1.1$  & $\mu_1^0=1.9$ &$\lambda_2^0=0.24$ &$\mu_2^0=0.36$ \\\hline
       (25,25) & LAD AE & 1.100	& 1.900 &0.240	& 0.360   \\ 
               & LAD MSE & 1.179E-05  &  1.277E-05 & 2.055E-05  & 2.328E-05 \\
               
               & AsyVar-LAD &1.261E-05 & 1.261E-05&  2.123E-05 &2.123E-05
               \\
               & LSE AE & -23.417  & 5.911  & -1.365  & 0.817 \\
               & LSE MSE &  1.906E+05 & 5.109E+03  & 2.402E+03  & 3.079E+02 \\ \hline 
       (50,50) & LAD AE &   	1.100	& 1.900 &0.240	& 0.360   \\ 
               & LAD MSE &  7.688E-07 &  7.589E-07 & 1.294E-06  & 1.359E-06 \\
               
               & AsyVar-LAD &7.885E-07 &7.885E-07 & 1.327E-06 & 1.327E-06
               \\
               & LSE AE & 1.173  & 1.529  &  0.164 & 0.447 \\
               & LSE MSE &  19.244 & 95.621  &  1.699 & 8.863 \\ \hline
    (100,100) &  LAD AE &   	1.100	& 1.900 &0.240	& 0.360   \\ 
               & LAD MSE & 5.031E-08  & 5.046E-08  & 8.243E-08  & 8.721E-08 \\
               
               & AsyVar-LAD &4.927E-08 & 4.927E-08& 8.294E-08 & 8.294E-08
               \\
               & LSE AE & 1.169  & 1.679  & 0.171 & 0.439 \\
               & LSE MSE & 1.564  &  22.181 &  8.597E-01 & 1.593 \\ \hline
       (200,200) & LAD AE & 1.100	& 1.900 &0.240	& 0.360   \\ 
               & LAD MSE & 3.315E-09  & 3.279E-09  & 5.505E-09  & 5.285E-09 \\
               
               & AsyVar-LAD &3.079E-09& 3.079E-09 & 5.184E-09 & 5.184E-09
               \\
               & LSE AE & -0.622  & 3.747  &  0.479 & 0.071 \\
               & LSE MSE & 1605.535  & 1858.705  & 37.976  & 43.989 \\ \hline
    (300,300) &  LAD AE & 1.100	& 1.900 &0.240	& 0.360   \\ 
               & LAD MSE & 5.560E-10  & 5.786E-10 & 1.135E-09  & 1.172E-09 \\
               
               & AsyVar-LAD & 6.083E-10& 6.083E-10& 1.024E-09 & 1.024E-09
               \\
               & LSE AE & 0.679  &  2.169 &  0.179 & 0.345 \\
               & LSE MSE &  95.240 &  29.760 &  4.469 & 4.428 \\ \hline
    \end{tabular}
    
\end{table}

\begin{table}[htb]
    \centering
    \caption{Amplitude parameter results for model (\ref{2comp}) with  Slash normal noise}
    \label{tab:9}
    \begin{tabular}{|c||c||c||c||c|c|}
    \hline
      \textbf{(T,S)} & \textbf{} & $A_1^0=4.2$ & $B_1^0=3.6$ & $A_2^0=3.3$ & $B_2^0=2.7$ \\\hline
       (25,25) &  LAD AE & 4.167  	&  3.620 & 	3.285 & 2.697   \\ 
               & LAD MSE &  6.718E-02 & 7.931E-02  & 6.623E-02  & 9.391E-02 \\
               
               & AsyVar-LAD & 7.119E-02& 8.964E-02& 6.848E-02 &9.236E-02
               \\
               & LSE AE &  73.999 &  5.926 & 47.691  & 5.541 \\
               & LSE MSE &  1.250E+06 &  2.564E+03 & 8.136E+05  &  1.770E+05\\ \hline
       (50,50) &  LAD AE &  4.206 	&  3.592 & 	3.302 & 2.693  \\ 
               & LAD MSE &  1.830E-02 & 2.232E-02  & 1.675E-02  & 2.191E-02 \\
               
               & AsyVar-LAD & 1.779E-02& 2.241E-02& 1.712E-02 & 2.309E-02
               \\
               & LSE AE & 12.280  & 5.930  & 5.510  &  2.144\\
               & LSE MSE & 7999.326  & 3102.814  &  753.322 & 355.829 \\ \hline
          (100,100) &  LAD AE &   4.197	& 3.603  &3.296 	& 2.703  \\ 
               & LAD MSE &  4.889E-03 & 6.265E-03  &  4.016E-03 & 5.602E-03 \\
               
               & AsyVar-LAD & 4.449E-03& 5.603E-03& 4.280E-03 &5.773E-03
               \\
               & LSE AE & 6.150  & 4.310  &  5.695 & 3.572 \\
               & LSE MSE &  389.474 & 212.655  &  351.717 & 335.386 \\ \hline
           (200,200) &  LAD AE &   4.202	&  3.598 & 	3.300&  2.700 \\ 
               & LAD MSE &  1.128E-03 & 1.504E-03  & 1.049E-03  & 1.391E-03 \\
               
               & AsyVar-LAD &1.112E-03 & 1.401E-03& 1.070E-03 & 1.443E-03
               \\
               & LSE AE & 8.961  &  18.250 & 1.913  & 0.865 \\
               & LSE MSE & 1794.662  &  96567.081 &  3177.226 &  2449.394 \\ \hline
               (300,300) &  LAD AE &   	4.200&   3.600 & 	3.300&  2.698 \\ 
               & LAD MSE &  4.789E-04 &  6.489E-04 & 5.176E-04  & 7.241E-04 \\
               
               & AsyVar-LAD & 4.944E-04&6.225E-04 & 4.755E-04 & 6.414E-04
               \\
               & LSE AE & 11.527  &  8.708 & 6.386  & 4.103 \\
               & LSE MSE & 14157.988  &  4393.049 &  770.175 & 1300.654 \\ \hline  
    \end{tabular}
\end{table}

We also performed simulations for model \ref{2comp} under outlier contaminated datasets. With the noise distribution as normal, slash normal and Cauchy, we contaminate a pre-fixed proportion of data with outliers by adding a large constant to the generated data.  We report some representative results for a 20\% outlier contamination setup in Tables \ref{tab:10}--\ref{tab:14}.

\begin{table}[htb]
    \centering
    \caption{Frequency parameter results for model (\ref{2comp}) with  $t_{(1)}$ noise and 20\% outliers}
    \label{tab:10}
    \begin{tabular}{|c||c||c||c||c||c|}
    \hline
      \textbf{(T,S)} & \textbf{} &  $\lambda_1^0=1.1$  & $\mu_1^0=1.9$ &$\lambda_2^0=0.24$ &$\mu_2^0=0.36$ \\\hline
       (25,25) & LAD AE & 1.100& 1.900& 0.240& 0.360 \\ 
       
               & LAD MSE &   5.548E-06& 4.809E-06& 7.832E-06& 9.852E-06 \\
               & AsyVar-LAD & 4.954E-06 & 4.954E-06 & 8.338E-06 &
               8.338E-06 \\
               & LSE AE & -36.330& 31.863& -1.706& 1.868 \\
               & LSE MSE & 2.849E+05& 1.823E+05& 6.924E+02& 4.421E+02 \\ \hline
       (50,50) & LAD AE &  1.100 & 1.900 & 0.240 & 0.360  \\ 
               & LAD MSE & 2.823E-07 & 3.311E-07 & 5.871E-07 & 6.167E-07\\
               
               & AsyVar-LAD & 3.096E-07& 3.096E-07 & 5.211E-07 & 5.211E-07 
               \\
               & LSE AE & 0.955 & 1.983 & 0.136 & 0.402\\
               & LSE MSE & 1.933E+01 & 6.833 & 1.926 & 1.141\\ \hline
    (100,100) & LAD AE &   1.100 &  1.900 &0.240 &  0.360   \\ 
               & LAD MSE & 2.286E-08 &  2.410E-08  &  3.460E-08 &  3.673E-08\\
               & AsyVar-LAD & 1.935E-08 & 1.935E-08 & 3.257E-08 & 3.257E-08 \\
               & LSE AE & 0.985 &  1.817   &  0.229 &  0.348 \\
               & LSE MSE & 4.241 &  2.087 &  1.205E-01 &  2.310E-01 \\ \hline
       (200,200) & LAD AE &  1.100  &   1.900  &   0.240  &   0.360   \\ 
               & LAD MSE & 1.156E-09  &   9.964E-10  &   2.043E-09  &   2.312E-09\\
               & AsyVar-LAD & 7.699E-10 &  7.699E-10 &  1.296E-09 & 1.296E-09\\
               & LSE AE &  1.217  &   1.817  &   0.178  &   0.226 \\
               & LSE MSE & 1.359  &   6.422E-01  &   7.903E-01  &   5.856  \\ \hline
    (300,300) & LAD AE &   1.100  &  1.900  &  0.240  &  0.360   \\ 
               & LAD MSE & 2.927E-10  &  3.005E-10  &  4.656E-10  &  3.978E-10 \\
               & AsyVar-LAD & 2.389E-10 & 2.389E-10  & 4.021E-10  & 4.021E-10 \\
               & LSE AE & 0.373  &  2.209  &  -0.091  &  0.502\\
               & LSE MSE & 1.034E+02  &  1.827E+01  &  1.738E+01  &  3.104 \\ \hline
    \end{tabular}
    
\end{table}

\begin{table}[htb]
    \centering
    \caption{Frequency parameter results for (\ref{2comp}) with  Slash normal noise and 20\% outliers}
    \label{tab:11}
    \begin{tabular}{|c||c||c||c||c||c|}
    \hline
      \textbf{(T,S)} &  &  $\lambda_1^0=1.1$  & $\mu_1^0=1.9$ &$\lambda_2^0=0.24$ &$\mu_2^0=0.36$ \\\hline
       (25,25) & LAD AE & 1.100 & 1.900 & 0.241 & 0.359   \\ 
               & LAD MSE & 1.351E-05 & 1.240E-05 & 2.518E-05 & 2.066E-05 \\
               
               & AsyVar-LAD &1.261E-05 & 1.261E-05&  2.123E-05 &2.123E-05
               \\
               & LSE AE & 0.390 & 2.637 & 0.062 & 0.430 \\
               & LSE MSE &  3.007E+01 & 5.930E+01 & 1.720 & 6.610E-01 \\ \hline 
       (50,50) & LAD AE &   	1.100 & 1.900 & 0.240 & 0.360   \\ 
               & LAD MSE &  8.693E-07 & 8.163E-07 & 1.666E-06 & 1.306E-06 \\
               
               & AsyVar-LAD &7.885E-07 &7.885E-07 & 1.327E-06 & 1.327E-06
               \\
               & LSE AE & 0.922 & 0.868 & 0.229 & 0.260 \\
               & LSE MSE &  4.244 & 3.277E+02 & 2.015 & 2.556 \\ \hline
    (100,100) &  LAD AE &   	1.100	& 1.900 &0.240	& 0.360   \\ 
               & LAD MSE & 6.462E-08 & 5.168E-08 & 8.567E-08 & 9.212E-08 \\
               
               & AsyVar-LAD &4.927E-08 & 4.927E-08& 8.294E-08 & 8.294E-08
               \\
               & LSE AE & -11.145 & 15.646 & 0.442 & -0.179 \\
               & LSE MSE & 2.885E+04 & 4.034E+04 & 8.504E+01 & 9.629E+01 \\ \hline
       (200,200) & LAD AE & 1.100	& 1.900 &0.240	& 0.360   \\ 
               & LAD MSE & 3.221E-09 & 3.046E-09 & 5.204E-09 & 6.020E-09 \\
               
               & AsyVar-LAD &3.079E-09& 3.079E-09 & 5.184E-09 & 5.184E-09
               \\
               & LSE AE & 1.426 & 2.021 & 0.142 & 0.359 \\
               & LSE MSE & 2.847E+01 & 1.769E+01 & 2.239 & 8.404E-01 \\ \hline
    (300,300) &  LAD AE & 1.100	& 1.900 &0.240	& 0.360   \\ 
               & LAD MSE & 6.071E-10 & 6.583E-10 & 1.057E-09 & 1.110E-09 \\
               
               & AsyVar-LAD & 6.083E-10& 6.083E-10& 1.024E-09 & 1.024E-09
               \\
               & LSE AE & 0.355 & 2.407 & -0.659 & 0.974 \\
               & LSE MSE &  1.131E+02 & 5.333E+01 & 1.538E+02 & 7.245E+01 \\ \hline
    \end{tabular}
    
\end{table}

\begin{table}[htb]
    \centering
    \caption{Amplitude parameter results for  (\ref{2comp}) with  Slash normal noise and 20\% outliers}
    \label{tab:12}
    \begin{tabular}{|c||c||c||c||c|c|}
    \hline
      \textbf{(T,S)} & \textbf{} & $A_1^0=4.2$ & $B_1^0=3.6$ & $A_2^0=3.3$ & $B_2^0=2.7$ \\\hline
       (25,25) &  LAD AE & 4.198 & 3.607 & 3.274 & 2.706  \\ 
               & LAD MSE &  8.572E-02 & 1.149E-01 & 8.073E-02 & 1.051E-01 \\
               
               & AsyVar-LAD & 7.119E-02& 8.964E-02& 6.848E-02 &9.236E-02
               \\
               & LSE AE &  15.014 & 8.059 & 4.065 & 2.960 \\
               & LSE MSE &  1.171E+04 & 6.622E+02 & 8.425E+02 & 2.404E+02\\ \hline
       (50,50) &  LAD AE &  4.199 & 3.585 & 3.316 & 2.677  \\ 
               & LAD MSE &  2.121E-02 & 2.539E-02 & 1.987E-02 & 2.471E-02 \\
               
               & AsyVar-LAD & 1.779E-02& 2.241E-02& 1.712E-02 & 2.309E-02
               \\
               & LSE AE & 21.172 & 8.938 & 27.106 & -7.772\\
               & LSE MSE & 3.096E+04 & 3.964E+03 & 4.314E+04 & 6.117E+03 \\ \hline
          (100,100) &  LAD AE &   4.196 & 3.600 & 3.305 & 2.700  \\ 
               & LAD MSE &  4.892E-03 & 6.621E-03 & 4.125E-03 & 6.125E-03 \\
               
               & AsyVar-LAD & 4.449E-03& 5.603E-03& 4.280E-03 &5.773E-03
               \\
               & LSE AE & 47.570 & -8.122 & 16.908 & 5.798 \\
               & LSE MSE &  3.219E+05 & 2.718E+04 & 1.555E+04 & 2.192E+03 \\ \hline
           (200,200) &  LAD AE &   4.202 & 3.597 & 3.298 & 2.701 \\ 
               & LAD MSE &  1.262E-03 & 1.512E-03 & 1.124E-03 & 1.435E-03 \\
               
               & AsyVar-LAD &1.112E-03 & 1.401E-03& 1.070E-03 & 1.443E-03
               \\
               & LSE AE & 14.463 & 3.168 & 5.266 & -1.803  \\
               & LSE MSE & 8.883E+03 & 1.510E+02 & 3.717E+02 & 2.947E+03 \\ \hline
               (300,300) &  LAD AE &   	4.199 & 3.600 & 3.299 & 2.702 \\ 
               & LAD MSE &  4.931E-04 & 5.582E-04 & 4.782E-04 & 6.382E-04 \\
               
               & AsyVar-LAD & 4.944E-04&6.225E-04 & 4.755E-04 & 6.414E-04
               \\
               & LSE AE & 16.077 & 20.590 & 14.520 & -5.173 \\
               & LSE MSE & 2.102E+04 & 5.288E+04 & 1.651E+04 & 9.986E+03 \\ \hline  
    \end{tabular}
    
\end{table}

\begin{table}[htb]
    \centering
    \caption{Frequency parameter results for (\ref{2comp}) with  $N(0,9)$ noise and 20\% outliers}
    \label{tab:13}
    \begin{tabular}{|c||c||c||c||c||c|}
    \hline
      \textbf{(T,S)} & \textbf{} &  $\lambda_1^0=1.1$  & $\mu_1^0=1.9$ &$\lambda_2^0=0.24$ &$\mu_2^0=0.36$  \\  \hline 
       (25,25) & LAD AE &  1.100 & 1.900 & 0.242 & 0.360  \\ 
               & LAD MSE & 3.133E-05 & 3.011E-05 & 5.704E-05 & 4.275E-05 \\
               
               & AsyVar-LAD & 2.837E-05& 2.837E-05 &4.777E-05 & 4.777E-05
               \\
               & LSE AE &1.100 & 1.900 & 0.244 & 0.360  \\
               & LSE MSE & 7.807E-05 & 7.606E-05 & 3.005E-04 & 1.007E-04 \\ \hline 
       (50,50) & LAD AE &1.100 & 1.900 & 0.240 & 0.360   \\ 
               & LAD MSE & 1.952E-06 & 2.008E-06 & 3.317E-06 & 3.428E-06 \\
               
               & AsyVar-LAD & 1.773E-06 & 1.773E-06 & 2.985E-06 & 2.985E-06
               \\
               & LSE AE &1.100 & 1.900 & 0.240 & 0.360   \\
               & LSE MSE & 3.934E-06 & 3.786E-06 & 9.406E-06 & 7.790E-06 \\ \hline
    (100,100) &  LAD AE &1.100 	&  1.900 & 	0.240& 0.360     \\ 
               & LAD MSE & 1.136E-07 & 7.917E-08 & 2.060E-07 & 1.874E-07 \\
               
               & AsyVar-LAD & 1.107E-07&1.107E-07& 1.865E-07 &  1.865E-07
               \\
               & LSE AE &1.100 & 1.900 & 0.240 & 0.360    \\
               & LSE MSE & 2.450E-07 & 2.508E-07 & 5.109E-07 & 4.466E-07 \\ \hline
       (200,200) &  LAD AE &1.100 	&  1.900 & 	0.240& 0.360  \\ 
               & LAD MSE & 6.607E-09 & 8.360E-09 & 1.022E-08 & 9.466E-09 \\
               
               & AsyVar-LAD & 6.929E-09& 6.929E-09 &1.166E-08&  1.166E-08
               \\
               & LSE AE &1.100 	&  1.900 & 	0.240& 0.360  \\
               & LSE MSE &  1.553E-08 & 1.915E-08 & 2.901E-08 & 1.909E-08 \\ \hline
    (300,300) &  LAD AE &   	1.100 	&  1.900 & 	0.240& 0.360   \\ 
               & LAD MSE & 1.459E-09 & 1.342E-09 & 3.003E-09 & 2.448E-09 \\
               
               & AsyVar-LAD &1.368E-09 &1.368E-09& 2.304E-09 & 2.304E-09
               \\
               & LSE AE &   1.100 	&  1.900 & 	0.240& 0.360  \\
               & LSE MSE & 2.386E-09 & 3.372E-09 & 6.194E-09 & 5.396E-09\\ \hline
    \end{tabular}
    
\end{table}

\begin{table}[htb]
    \centering
    \caption{Amplitude parameter results for (\ref{2comp}) with  $N(0,9)$ noise and 20\% outliers}
    \label{tab:14}
    \begin{tabular}{|c||c||c||c||c||c|}
    \hline
      \textbf{(T,S)} & \textbf{} & $A_1^0=4.2$ & $B_1^0=3.6$ & $A_2^0=3.3$ & $B_2^0=2.7$ \\\hline
       (25,25) &  LAD AE &  4.160 & 3.598 & 3.168 & 2.774  \\ 
               & LAD MSE & 1.525E-01 & 2.078E-01 & 1.884E-01 & 2.114E-01 \\
               
               & AsyVar-LAD & 1.601E-01 & 2.0173E-01 & 1.541E-01 &2.078E-01
               \\
               & LSE AE &  4.155 & 3.517 & 2.979 & 2.823\\
               & LSE MSE & 4.749E-01 & 5.294E-01 & 7.062E-01 & 8.922E-01 \\ \hline
       (50,50) &  LAD AE &  4.176 & 3.602 & 3.300 & 2.711 \\ 
               & LAD MSE &  4.635E-02 & 5.127E-02 & 4.328E-02 & 5.048E-02  \\
               
               & AsyVar-LAD & 4.001E-02 & 5.043E-02& 3.852E-02 & 5.201E-02
               \\
               & LSE AE & 4.178 & 3.604 & 3.291 & 2.678 \\
               & LSE MSE & 1.069E-01 & 1.127E-01 & 1.147E-01 & 1.320E-01 \\ \hline
          (100,100) &  LAD AE &  4.201 & 3.601 & 3.296 & 2.705   \\ 
               & LAD MSE & 9.089E-03 & 1.294E-02 & 9.920E-03 & 1.251E-02 \\
               
               & AsyVar-LAD & 1.001E-02& 1.260E-02& 9.612E-03 &1.303E-02
               \\
               & LSE AE &  4.197 & 3.606 & 3.302 & 2.706\\
               & LSE MSE & 2.079E-02 & 3.194E-02 & 2.391E-02 & 2.787E-02  \\ \hline
           (200,200) &  LAD AE &  4.198 & 3.598 & 3.301 & 2.704 \\ 
               & LAD MSE &  2.542E-03 & 3.595E-03 & 1.993E-03 & 3.092E-03 \\
               
               & AsyVar-LAD & 2.517E-03 &3.234E-03 & 2.401E-03 & 3.245E-03
               \\
               & LSE AE &  4.199 & 3.598 & 3.299 & 2.701 \\
               & LSE MSE & 6.686E-03 & 7.399E-03 & 4.640E-03 & 6.109E-03\\ \hline
               (300,300) &  LAD AE &  4.198 & 3.603 & 3.296 & 2.701 \\ 
               & LAD MSE & 1.195E-03 & 1.609E-03 & 1.169E-03 & 1.567E-03 \\
               
               & AsyVar-LAD & 1.112E-04  & 1.400E-04& 1.069E-04 & 1.443E-04
               \\
               & LSE AE &   4.202 & 3.599 & 3.296 & 2.703 \\
               & LSE MSE & 2.158E-03 & 2.843E-03 & 2.664E-03 & 3.868E-03  \\ \hline  
    \end{tabular}
        
\end{table}

\clearpage

Simulation studies indicate satisfactory and robust performance of the proposed LAD estimates.  From the above tables, we can observe that as $(T,S)$ increases from (25,25) to (300,300), the mean square errors and the average biases of LAD estimators decrease. This is very much in line with the consistency properties of the LAD estimators. For heavy tailed error distributions like the $t$ distribution with 1 degree of freedom and the slash normal, clearly LAD estimators outperforms least square estimators in terms of both the average bias and mean square error, which indicates robust behaviour of the LAD estimates. Performance of the non-robust LSEs break down under the presence of heavy tailed noise and fails to identify the correct parameters, especially under small sample sizes.  Under Gaussian error LSEs are the MLEs and performs the best, as expected.  Performance of the LAD estimators under Gaussian error is comparable with the best performing LSEs. Further, we observe that  asymptotic variance and the mean square errors of the LAD estimators become quite comparable as $(T,S)$ increases, indicating the validity of the asymptotic distribution results.  We also observe that for all the scenarios, the performance of LAD estimates of frequencies are far more accurate than the estimates of amplitudes, which is expected as the frequencies have a higher rate of convergence.

\section{Synthetic data analysis}\label{sec:6}
In this section, we analyse a synthetically generated data using model (\ref{eqn:1}) to demonstrate how the proposed parameter estimation method works. The synthetic data is generated using the similar model structure and parameters as outlined in (\ref{onecomp}). We assumed $T = S = 100$ and the true model parameters were $(A^{0}, B^{0}, \lambda^{0}, \mu^{0}) = (2.4, 1,4, 0.6, 0.4)$. The noisy texture, where the noise component is drawn from slash normal distribution, is plotted in Figure \ref{fig:1}; and the original texture (without the noise component), is plotted in Figure \ref{fig:2}. Our problem is to extract the original texture Figure \ref{fig:2} from the noisy texture Figure \ref{fig:1}. We performed the least absolute deviation based estimation method for the parameter estimation.  The LAD estimated parameters are given by, $$\hat{A} = 2.3444058, \hat{B} = 1.4133243, \hat{\lambda} =  0.4000882, \hat{\mu} = 0.5999380$$
Based on the above estimates, the estimated grayscale texture is plotted in Figure \ref{fig:3}. From Figure \ref{fig:2} and Figure \ref{fig:3} it is quite clear that they match quite well. However, the estimated texture obtained from least square estimation fails to recover the texture from the noisy image data.

\begin{figure}[ht]
\caption{Grey scale image of noisy synthetic true data}
    \label{fig:1}
    \centering
    \includegraphics[width=10cm]{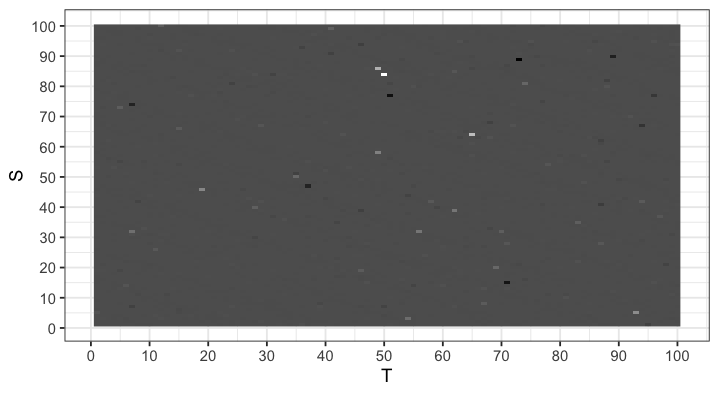}
\end{figure}

\begin{figure}[htb]
\caption{Grey scale image of Noiseless synthetic true data}
    \label{fig:2}
    \centering
    \includegraphics[width=10cm]{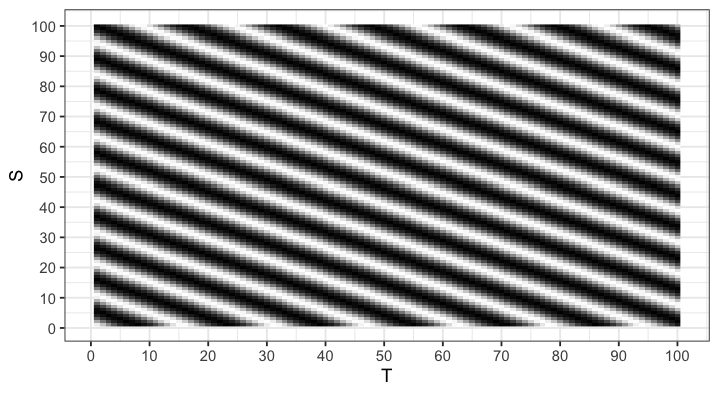}
\end{figure}

\begin{figure}[htb]
\caption{Grey scale image of recovered data from LAD estimation}
    \label{fig:3}
    \centering
    \includegraphics[width=10cm]{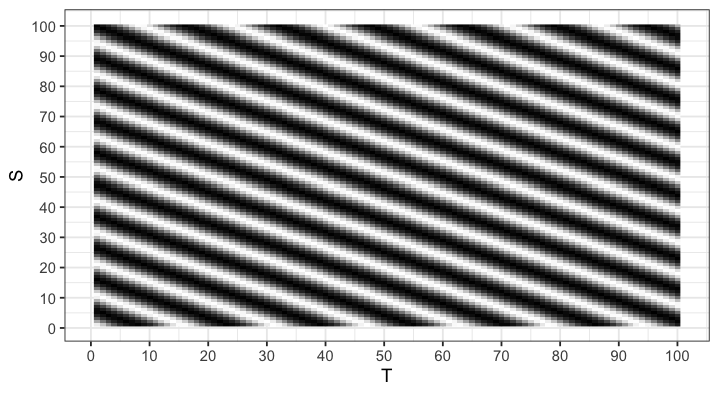}
\end{figure}

\section{Conclusion} \label{sec:7}
In this paper, we proposed a robust least absolute deviation estimators for estimating the parameters of a 2-dimensional sinusoidal model. We have established that the LAD estimators are  strongly consistent and further have asymptotic normal distribution. It is observed that the LAD estimators of frequencies have a higher rate of convergence than the LAD estimators of amplitudes.  It is observed in the simulations that in case of heavy tailed noise distribution and cases where outliers are present, the LAD method of estimation clearly outperforms least square method of estimation.  While the performance of the non-robust LSEs break down under the presence of heavy tailed noise and fails to identify the correct parameters, the proposed LAD estimators provide robust estimates and performs satisfactorily under heavy tailed noise distributions.  Synthetic data analysis of a texture data provides evidence of practical applicability of proposed LAD approach.




\bibliographystyle{apalike}
\bibliography{references}
\appendix
\section{Appendix}
\subsection{Proof of Thereom 1}\label{Appendix:1}
For all  $ \theta \neq \theta_{0} $, define,
  \begin{equation*}
      H_{T,S}(\theta) = Q_{T,S}(\theta) - Q_{T,S}(\theta_{0})
  \end{equation*}
  Now expanding $H_{T,S}(\theta)$ explicitly, we obtain,
  \begin{align*}
  \begin{split}
      H_{T,S}(\theta) &= \frac{1}{TS}\sum\limits_{t=1}^{T}\sum\limits_{s=1}^{S}\Bigg\{\Big\vert y(t,s) - A \cos(\lambda t + \mu s) - B \sin(\lambda t + \mu s)\Big\vert - {} \\
  &\qquad\qquad \Big\vert y(t,s) - A^{0}\cos(\lambda^{0}t + \mu^{0}s) - B^{0}\sin(\lambda^{0}t + \mu^{0}s) \Big\vert\Bigg\}
  \end{split}
  \notag\\[1ex]
  &= \frac{1}{TS}\sum\limits_{t=1}^{T}\sum\limits_{s=1}^{S} \Bigg\{\Big\vert h_{t,s}(\theta)+\epsilon(t,s)\Big\vert-\Big\vert\epsilon(t,s)\Big\vert\Bigg\} = \frac{1}{TS}\sum\limits_{t=1}^{T}\sum\limits_{s=1}^{S} Z_{t,s}(\theta)
  \end{align*}
  where, $Z_{t,s} = \Bigg\{\Big\vert h_{t,s}(\theta)+\epsilon(t,s)\Big\vert-\Big\vert\epsilon(t,s)\Big\vert\Bigg\}$  and $h_{t,s}(\theta) = A^{0}\cos(\lambda^{0}t + \mu^{0}s) + B^{0}\sin(\lambda^{0}t + \mu^{0}s) - A \cos(\lambda t + \mu s) - B \sin(\lambda t + \mu s)$. $Z_{t,s}$'s being only a function of \text{i.i.d} $\epsilon(t,s)$ are also independently distributed random variables. The proof of the theorem proceeds through essentially the two following important steps: \begin{itemize}
      \item $H_{T,S}(\theta)-\lim\limits_{T,S \rightarrow \infty}\mathbb{E}(H_{T,S}(\theta)) \overset{a.s.}\longrightarrow 0 ~~~ \text{uniformly for all}~\theta \in \Theta$ \\
      \item $\theta_{0}$ is the unique minimizer of $\lim\limits_{T,S \rightarrow \infty}\mathbb{E}(H_{T,S}(\theta))$.
  \end{itemize}
  We first show that $\mathbb{E}\Big[Z_{t,s}(\theta)\Big]<\infty$. Let us consider the case when $h_{t,s}(\theta)>0$.
  \begin{align*}
      \mathbb{E}\Big[Z_{t,s}(\theta)\Big] &= \int Z_{t,s}(\theta)~ dG(\epsilon(t,s)) \notag\\[1ex]
      &= \int_{\epsilon(t,s)>0}h_{t,s}(\theta)dG(\epsilon(t,s)) + \int_{-h_{t,s}(\theta)}^{0}\Big(h_{t,s}(\theta)+2\epsilon(t,s)\Big)dG(\epsilon(t,s))  {} \\
  &\qquad\qquad \hspace{5cm} - \int\limits_{-\infty}^{-h_{t,s}(\theta)}h_{t,s}(\theta)dG(\epsilon(t,s))
      \notag\\[1ex]
      &= 2\int_{-h_{t,s}(\theta)}^{0}\Big(h_{t,s}(\theta)+\epsilon(t,s)\Big)dG(\epsilon(t,s)) ~~(\text{since}~G(0) = \frac{1}{2})
      \notag\\[1ex]
      &= 2\Big[h_{t,s}(\theta)G(0) - G\left(-h_{t,s}^{*}(\theta)\right)h_{t,s}(\theta)\Big]~~~\left(\text{with} -h_{t,s}^{*}(\theta)\in\left(0,-h_{t,s}(\theta)\right)\right).
  \end{align*}
  The last equality in the above is obtained by applying integration by parts and integral mean value theorem. Similarly, for $h_{t,s}(\theta)<0$ proceeding same as above, we obtain, \begin{align*}
     \mathbb{E}\Big[Z_{t,s}(\theta)\Big] = 2\Big[h_{t,s}(\theta)G(0) - G\left(-h_{t,s}^{**}(\theta)\right)h_{t,s}(\theta)\Big],~~\text{where}, -h_{t,s}^{**}(\theta)\in\left(0,-h_{t,s}(\theta)\right).
  \end{align*}
  Hence, it can be easily observed that $\mathbb{E}\Big[Z_{t,s}(\theta)\Big] < \infty$. Proceeding similarly as above we can show that \text{Var}$\Big[Z_{t,s}(\theta)\Big]<\infty$ and the bounds for both $\mathbb{E}\Big[Z_{t,s}(\theta)\Big]$ and Var$\Big[Z_{t,s}(\theta)\Big]$ are independent of $t$ and $s$. \\
  
  Further, since $\Theta$ is compact $ \exists~ \Theta_1,\ldots,\Theta_k$ such that $\Theta = \cup_{i=1}^{k} \Theta_i$ and $\sup\limits_{\theta \in \Theta_i} Z_{t,s}(\theta) - \inf\limits_{\theta \in \Theta_i}  Z_{t,s}(\theta) < \frac{\epsilon}{2^t 2^s}$ for each $\Theta_i$. Now,
  
  \begin{align*}
  \begin{split}
      H_{T,S}(\theta)-\lim_{T,S \rightarrow \infty}\mathbb{E}(H_{T,S}(\theta)) &= \Bigg[\frac{1}{TS}\sum\limits_{t=1}^{T}\sum\limits_{s=1}^{S} Z_{t,s}(\theta) - \frac{1}{TS}\sum\limits_{t=1}^{T}\sum\limits_{s=1}^{S}\mathbb{E}\sup\limits_{\theta\in\Theta_i}Z_{t,s}(\theta)\Bigg]+ {} \\
  &\qquad\qquad \hspace{-0.5cm}\Bigg[ \frac{1}{TS}\sum\limits_{t=1}^{T}\sum\limits_{s=1}^{S}\mathbb{E}\sup\limits_{\theta\in\Theta_i}Z_{t,s}(\theta) - \lim_{T,S \rightarrow \infty} \frac{1}{TS}\sum\limits_{t=1}^{T}\sum\limits_{s=1}^{S}\mathbb{E}Z_{t,s}(\theta)\Bigg].
  \end{split}
  \end{align*}
  We can show that the quantity $Z_{t,s}(\theta)$ is bounded by some quantity $M$ by using triangle inequality. Now using the compactness of the the parameter space and Kolmogorov's strong law of large numbers(\cite{bill}), we can show that
  $$H_{T,S}(\theta)-\lim_{T,S \rightarrow \infty}\mathbb{E}(H_{T,S}(\theta)) \overset{as.}\longrightarrow 0 ~~~ \text{uniformly for all}~\theta \in \Theta.$$
  Now define $Q(\theta) = \lim_{T,S \rightarrow \infty}\mathbb{E}(H_{T,S}(\theta))$. It is easy to observe that $Q(\theta_{0})=0$. For all $\theta \neq \theta_{0}$, we obtain,
  \begin{align*}
      \begin{split}
         Q(\theta) &= \lim_{T,S \rightarrow \infty}\frac{2}{TS}\sum\limits_{Z_{t,s}(\theta)>0}\Big[h_{t,s}(\theta)\{G(0) - G\left(-h_{t,s}^{*}(\theta)\right)\}\Big] + {}\\
         &\qquad\qquad \hspace{3cm}\frac{2}{TS}\sum\limits_{Z_{t,s}(\theta)<0}\Big[h_{t,s}(\theta)\{G(0) - G\left(-h_{t,s}^{**}(\theta)\right)\}\Big]
      \end{split}
      \notag\\[1ex]
      &\geq \lim_{T,S \rightarrow \infty}\frac{2}{TS}\sum\limits_{t=1}^{T}\sum\limits_{s=1}^{S}\big\vert h_{t,s}(\theta)\big\vert\min\bigg\{G\left(-h_{t,s}^{**}(\theta)\right)-\frac{1}{2}, \frac{1}{2} - G\left(-h_{t,s}^{*}(\theta)\right)\bigg\}.
  \end{align*}
  Now from Lemma \ref{lemma:1} and \cite[Lemma 4]{oberhofer}, it can be shown that $\lim\limits_{T,S \rightarrow \infty}\frac{1}{TS}\sum\limits_{t=1}^{T}\sum\limits_{s=1}^{S}\big\vert h_{t,s}(\theta)\big\vert^2 = \frac{1}{2}A^{2}_{0} + \frac{1}{2}B^{2}_{0} + \frac{1}{2}A^{2} + \frac{1}{2}B^{2}>0$. Hence, we can conclude that $Q(\theta)$ has an unique minimizer in $\theta_0\in\Theta$. Thus, by using the  sufficient conditions of Lemma 2.2 of \cite{white}, we can conclude that $\hat{\theta}_{T,S}$ is strongly consistent for $\theta_0$, the true value of the parameter vector $\theta$.

\subsection{Proof of Theorem 2}\label{Appendix:2}
Let us consider,
\begin{align*}
    Q_{T,S}(\theta) &= \frac{1}{TS}\sum\limits_{t=1}^{T}\sum\limits_{s=1}^{S}\Big\vert y(t,s) - A cos(\lambda t + \mu s) - B sin(\lambda t + \mu s)\Big\vert\\
    &= \frac{1}{TS}\sum\limits_{t=1}^{T}\sum\limits_{s=1}^{S}\big\vert h_{t,s}(\theta)+\epsilon(t,s)\big\vert,
\end{align*}
where, $h_{t,s}(\theta) = A^{0}cos(\lambda^{0}t + \mu^{0}s) + B^{0}sin(\lambda^{0}t + \mu^{0}s) - A cos(\lambda t + \mu s) - B sin(\lambda t + \mu s)$. Note that $Q_{T,S}(\theta)$ is not a differentiable function. Since we shall make use of Taylor series expansion for finding the asymptotic normality of LAD estimators, we approximate the function $Q_{T,S}(\theta)$ by a function $Q^{*}_{T,S}(\theta)$, where,
$$Q^{*}_{T,S}(\theta) = \frac{1}{TS}\sum\limits_{t=1}^{T}\sum\limits_{s=1}^{S}\delta_{T,S}(h_{t,s}(\theta)+\epsilon(t,s)),$$ such that,
$\lim_{x\rightarrow \infty}\delta_{T,S}(x) = |x|$ and $\delta_{T,S}(x)$ be a smooth function of $x$.
We take
\begin{align*}
    \delta_{T,S}(x) &= \bigg[-\frac{1}{3}\beta^{2}_{T,S}x^3 + \beta_{T,S}x^2 + \frac{1}{3\beta_{T,S}}\bigg]\mathbb{I}_{\{0<x\leq \frac{1}{\beta_{T,S}}\}} + x\mathbb{I}_{\{x>\frac{1}{\beta_{T,S}}\}},
\end{align*}
where, $\mathbb{I}_{B}$ denote the indicator function and the following conditions are satisfied:
\begin{itemize}
    \item $\delta_{T,S}(x)$ is an even function of $x$, i.e. $\delta_{T,S}(x) = \delta_{T,S}(-x)$.
    \item $\beta_{T,S}$ is an appropriately chosen increasing function of both $T$ and $S$, simultaneously, such that $\lim\limits_{T,S\rightarrow\infty}\frac{1}{\beta_{T,S}} = 0$ with $T^2 S^2 = o(\beta^3_{T,S})$ and $\beta_{T,S} = o(TS)$.
\end{itemize}
Note that, unlike $Q_{T,S}(\theta)$, $Q^{*}_{T,S}(\theta)$ is a twice continuously differentiable function. Let $\theta^{'}_{T,S} = (A^{'}_{T,S},B^{'}_{T,S},\lambda^{'}_{T,S},\mu^{'}_{T,S})$ denote the  minimizer of $Q^{*}_{T,S}(\theta)$. It can be shown easily, following the arguments of  Appendix Section \ref{Appendix:1}, that $\theta^{'}_{T,S}$ is strongly consistent for $\theta_0$.
We first show that the minimum of $Q^{*}_{T,S}(\theta)$ is close to the minimum of $Q_{T,S}(\theta)$, for large $T,S$. To show this, let us consider the term $Q^{*}_{T,S}(\theta) - Q_{T,S}(\theta) $. Now,
\begin{align*}
    Q^{*}_{T,S}(\theta) - Q_{T,S}(\theta) &= \frac{1}{TS}\sum\limits_{t=1}^{T}\sum\limits_{s=1}^{S}\bigg\{\delta_{T,S}(h_{t,s}(\theta)+\epsilon(t,s)) - \vert h_{t,s}(\theta)+\epsilon(t,s)\vert\bigg\}\\
    &= \begin{cases} \frac{1}{TS}\sum\limits_{t=1}^{T}\sum\limits_{s=1}^{S}\big[ -\frac{1}{3}\beta^{2}_{T,S}u^3_{t,s}(\theta) + \beta_{T,S}u^2_{t,s}(\theta) + \frac{1}{3\beta_{T,S}}-u_{t,s}(\theta)\big], &{}\\
         \qquad\qquad \hspace{5.0cm} 0<u_{t,s}(\theta)\leq \frac{1}{\beta_{T,S}}\\
    \frac{1}{TS}\sum\limits_{t=1}^{T}\sum\limits_{s=1}^{S}\big[~~ \frac{1}{3}\beta^{2}_{T,S}u^3_{t,s}(\theta) + \beta_{T,S}u^2_{t,s}(\theta) + \frac{1}{3\beta_{T,S}}+u_{t,s}(\theta)\big], & {}\\
         \qquad\qquad \hspace{5.0cm}-\frac{1}{\beta_{T,S}}<u_{t,s}(\theta)\leq 0
    \end{cases}\\
    \begin{split}
    &= \frac{1}{TS}\sum\limits_{t=1}^{T}\sum\limits_{s=1}^{S}\bigg[ -\frac{1}{3}\beta^{2}_{T,S}u^3_{t,s}(\theta) + \beta_{T,S}u^2_{t,s}(\theta) + 
         \frac{1}{3\beta_{T,S}}-u_{t,s}(\theta)\bigg]{}\\
         &\qquad\qquad\mathbb{I}_{\{0<u_{t,s}(\theta)\leq \frac{1}{\beta_{T,S}}\}} + 
         \bigg[~~ \frac{1}{3}\beta^{2}_{T,S}u^3_{t,s}(\theta) + \beta_{T,S}u^2_{t,s}(\theta) + {}\\
         &\qquad\qquad\qquad\qquad\qquad\qquad \frac{1}{3\beta_{T,S}}+u_{t,s}(\theta)\bigg]\mathbb{I}_{\{-\frac{1}{\beta_{T,S}}<u_{t,s}(\theta)\leq 0\}}
    \end{split}\\
    &= \frac{1}{TS}\sum\limits_{t=1}^{T}\sum\limits_{s=1}^{S}W_{t,s}(\theta),\text{say,}
\end{align*}
where, $u_{t,s}(\theta) = h_{t,s}(\theta)+\epsilon(t,s))$.
Note that,
\begin{align*}
    \vert W_{t,s}(\theta)\vert &\leq \left(2\beta_{T,S}u^{2}_{t,s}(\theta) + \frac{2}{3\beta_{T,S}}\right)\mathbb{I}_{\{\vert u_{t,s}(\theta)\vert\leq \frac{1}{\beta_{T,S}}\}}\\
    &\leq \left(\frac{2}{\beta_{T,S}}+\frac{2}{3\beta_{T,S}}\right)\mathbb{I}_{\{\vert u_{t,s}(\theta)\vert \leq \frac{1}{\beta_{T,S}}\}}\\
    &= \frac{8}{3\beta_{T,S}}\mathbb{I}_{\{\vert u_{t,s}(\theta)\vert\leq \frac{1}{\beta_{T,S}}\}}.
\end{align*}
Thus we have,
\begin{align*}
    \mathbb{P}\left[\vert TS\left\{Q^{*}_{T,S}(\theta) - Q_{T,S}(\theta)\right\}\vert\geq \epsilon\right] &\leq \frac{\mathbb{E}\left\vert TS\left\{Q^{*}_{T,S}(\theta) - Q_{T,S}(\theta)\right\}\right\vert}{\epsilon}~~~~\\
    &= \frac{\mathbb{E}\left\vert\sum\limits_{t=1}^{T}\sum\limits_{s=1}^{S}W_{t,s}(\theta)\right\vert}{\epsilon}\\
    &\leq \frac{\sum\limits_{t=1}^{T}\sum\limits_{s=1}^{S}\mathbb{E}\left\vert W_{t,s}(\theta)\right\vert}{\epsilon}~~~~\\
    &\leq\frac{8}{3\beta_{T,S}\epsilon}\sum\limits_{t=1}^{T}\sum\limits_{s=1}^{S}\mathbb{P}\left\{\left\vert u_{t,s}(\theta)\right\vert\leq \frac{1}{\beta_{T,S}}\right\}\\
    &= \frac{8}{3\beta_{T,S}\epsilon}\sum\limits_{t=1}^{T}\sum\limits_{s=1}^{S}\int\limits_{-h_{t,s}(\theta)-\frac{1}{\beta_{T,S}}}^{-h_{t,s}(\theta)+\frac{1}{\beta_{T,S}}}g(\epsilon(t,s))d\epsilon(t,s)\\
    &= \frac{16~\text{g}(c)~TS}{3\epsilon\beta^{2}_{T,S}} ~~\text{(using integral mean value theorem)} \\
    &= \left(\frac{16~\text{g}(c)}{3\epsilon}\right)\left(\frac{\beta_{T,S}}{TS}\right)\left(\frac{T^{2}S^{2}}{\beta^{3}_{T,S}}\right)\rightarrow 0~~\text{as}~T,S\rightarrow \infty.
\end{align*}
Thus, we can conclude that $TS\left\{Q^{*}_{T,S}(\theta) - Q_{T,S}(\theta)\right\} = o_{p}(1)$. Now since the parameter space $\Theta$ is compact and $Q^{*}_{T,S}(\theta) - Q_{T,S}(\theta)$ is continuous, we can say that there exists $\theta^{*}$ such that $TS\left\{Q^{*}_{T,S}(\theta^{*}) - Q_{T,S}(\theta^{*})\right\} = \sup\limits_{\theta\in\Theta}TS\left\{Q^{*}_{T,S}(\theta) - Q_{T,S}(\theta)\right\}$. Hence we obtain from above,
\begin{equation}
\sup\limits_{\theta\in\Theta}TS\left\{Q^{*}_{T,S}(\theta) - Q_{T,S}(\theta)\right\} = o_{p}(1).\label{eq:5}
\end{equation}
Now $\hat{\theta}_{T,S}$ being the minimizer of $Q_{T,S}(\theta)$,  $Q_{T,S}(\hat{\theta}_{T,S})\leq Q_{T,S}(\theta^{'}_{T,S})$. Hence from \eqref{eq:5}, we can conclude that, \begin{equation}
    TS\left\{Q^{*}_{T,S}(\hat{\theta}_{T,S}) - Q^{*}_{T,S}(\theta^{'}_{T,S})\right\} = o_{p}(1).\label{eq:6}
\end{equation}
Now by Taylor series expansion upto $2^{\text{nd}}$ order, we have the following,
\begin{align}
\begin{split}
   Q^{*}_{T,S}(\hat{\theta}_{T,S}) =  Q^{*}_{T,S}(\theta^{'}_{T,S}) + \nabla Q^{*}_{T,S}(\theta^{'}_{T,S})(\hat{\theta}_{T,S}-\theta^{'}_{T,S}) + {}\\
         \qquad \hspace{2cm}\frac{1}{2}(\hat{\theta}_{T,S}-\theta^{'}_{T,S})^{T}\nabla^{2}Q^{*}_{T,S}(\bar{\theta}_{T,S})(\hat{\theta}_{T,S}-\theta^{'}_{T,S});\label{eq:7}
\end{split}         
\end{align}
where, $\bar{\theta}_{T,S} = \eta \hat{\theta}_{T,S}+ (1-\eta)\theta^{'}_{T,S}$ and $\eta \in (0,1)$ and $\nabla Q^{*}_{T,S}(.)$ and $\nabla^{2}Q^{*}_{T,S}(.)$ denote the first and second order derivatives of $Q^{*}_{T,S}(.)$.  Now since $\nabla Q^{*}_{T,S}(\theta^{'}_{T,S}) = 0$, \eqref{eq:7} gives
\begin{equation}
   Q^{*}_{T,S}(\hat{\theta}_{T,S}) =  Q^{*}_{T,S}(\theta^{'}_{T,S}) + \frac{1}{2}(\hat{\theta}_{T,S}-\theta^{'}_{T,S})^{T}\nabla^{2}Q^{*}_{T,S}(\bar{\theta}_{T,S})(\hat{\theta}_{T,S}-\theta^{'}_{T,S}).\label{eq:8}
\end{equation}
Realize that $\nabla^{2}Q^{*}_{T,S}(\bar{\theta}_{T,S})$ is a symmetric matrix and let  $\lambda$ denote the smallest eigenvalue of the matrix $\nabla^{2}Q^{*}_{T,S}(\bar{\theta}_{T,S})$. Now using min-max theorem, we have, $$(\hat{\theta}_{T,S}-\theta^{'}_{T,S})^{T}\nabla^{2}Q^{*}_{T,S}(\bar{\theta}_{T,S})(\hat{\theta}_{T,S}-\theta^{'}_{T,S}) \geq \lambda(\hat{\theta}_{T,S}-\theta^{'}_{T,S})^{T}(\hat{\theta}_{T,S}-\theta^{'}_{T,S}).$$
Now from \eqref{eq:8} we get
\begin{equation*}
    TS(\hat{\theta}_{T,S}-\theta^{'}_{T,S})^{T}(\hat{\theta}_{T,S}-\theta^{'}_{T,S}) \leq \frac{2TS\left\{Q^{*}_{T,S}(\hat{\theta}_{T,S}) -  Q^{*}_{T,S}(\theta^{'}_{T,S})\right\}}{\lambda}.
\end{equation*}
Now to prove that $\sqrt{TS}\|\hat{\theta}_{T,S}-\theta^{'}_{T,S}\| = o_{p}(1)$, it is sufficient to show that $\lambda>0$ as we already have  \eqref{eq:6}.
Thus it is enough to show that the matrix $\nabla^{2}Q^{*}_{T,S}(\bar{\theta}_{T,S})$ is a positive definite, as $ T,S \longrightarrow \infty$.\\\\
Let us now derive a result which will be useful to show the above stated positive definiteness.
Note that 
\begin{align*}
    \begin{split}
    \mathbb{E}\left[\frac{1}{TS}\sum\limits_{t=1}^{T}\sum\limits_{s=1}^{S}\delta^{''}_{T,S}(\epsilon(t,s))\right] &= \frac{1}{TS}\sum\limits_{t=1}^{T}\sum\limits_{s=1}^{S} \bigg[\int_{0}^{\frac{1}{\beta_{T,S}}}(-2\beta^{2}_{T,S}x+2\beta_{T,S})\, dG(x) + {}\\
         &\qquad \int_{-\frac{1}{\beta_{T,S}}}^{0}(2\beta^{2}_{T,S}x+2\beta_{T,S})\, dG(x)\bigg]\end{split}\\
    &= \frac{2}{TS}\sum\limits_{t=1}^{T}\sum\limits_{s=1}^{S}\left[g\left(\frac{1}{\beta_{T,S}}\right)-M\right]\\
    &= 2g\left(\frac{1}{\beta_{T,S}}\right)-4M~~\longrightarrow~2g(0) ~~\text{as}~~ T,S \longrightarrow \infty;
\end{align*}
where, $$M = \int_{0}^{\frac{1}{\beta_{T,S}}}g^{'}(x)\left(\beta^{2}_{T,S}\frac{x^2}{2}+\beta_{T,S}x\right)dx$$ 
and $$ |M|\leq C\int_{0}^{\frac{1}{\beta_{T,S}}}\left(\beta^{2}_{T,S}\frac{x^2}{2}+\beta_{T,S}x\right)dx = C\frac{2}{3\beta_{T,S}}\rightarrow 0~~ \text{as}~~T,S\rightarrow \infty.$$ Therefore \begin{equation*}
    \mathbb{E}\left[\frac{1}{TS}\sum\limits_{t=1}^{T}\sum\limits_{s=1}^{S}\delta^{''}_{T,S}(\epsilon(t,s))\right] = 2g(0) + o(1). \label{eq:9}
\end{equation*}
Similarly, we can show that $\text{Vr}\left[\frac{1}{TS}\sum\limits_{t=1}^{T}\sum\limits_{s=1}^{S}\delta^{''}_{T,S}(\epsilon(t,s))\right] = o(1)$. Combining both of them and by using Markov's inequality, we obtain that $\frac{1}{TS}\sum\limits_{t=1}^{T}\sum\limits_{s=1}^{S}\delta^{''}_{T,S}(\epsilon(t,s)) = 2g(0) + o_{p}(1)$.
\newline Now let $\nabla^{2}Q^{*}_{T,S}(\bar{\theta}_{T,S}) = ((q_{i,j}))_{4\times4}$. Using the lemma 3.2, straightforward calculations yields the following:
$$q_{1,1} = q_{2,2} = g(0) + o_{p}(1); ~~~~~~ q_{1,2} = o_p(1);$$
$$\frac{1}{T}q_{1,3} = \frac{1}{2}B^0 g(0) + o_{p}(1);~~~~~\frac{1}{S}q_{1,4} = \frac{1}{2}B^0 g(0) + o_{p}(1);$$
$$\frac{1}{T} q_{2,3} = -\frac{1}{2} A^0 g(0) + o_{p}(1);~~~~~\frac{1}{S} q_{2,4} = - \frac{1}{2}A^0 g(0) + o_{p}(1);$$

$$\frac{1}{T^2}q_{3,3} =  \frac{1}{S^2}q_{4,4} = \frac{1}{3}({A^{0}}^2 + {B^{0}}^{2}) + o_p(1); ~~~~~~\frac{1}{TS}q_{3,4} = \frac{1}{4}({A^{0}}^2 + {B^{0}}^2) + o_p(1).$$
Thus we have $\nabla^{2}Q^{*}_{T,S}(\bar{\theta}_{T,S})$  positive definite for $T,S \rightarrow \infty$. Further, we have  \begin{equation*}\sqrt{TS}\|\hat{\theta}_{T,S}-\theta^{'}_{T,S}\|= o_{p}(1). \label{eq:10}\end{equation*}
Under the same conditions, by using the Taylor series expansion of $Q^{*}_{T,S}(\theta)$ about only $\lambda$, we have 
$$Q^{*}_{T,S}(\hat{\theta}_{T,S}) - Q^{*}_{T,S}(\theta^{'}_{T,S}) = \frac{1}{2}(\hat{\lambda}_{T,S} -\lambda^{'}_{T,S})^{2}\frac{\partial^{2}Q^{*}_{T,S}(\bar{\theta}_{T,S})}{\partial \lambda^{2}}.$$ 
We have that
$ T^{3}S(\hat{\lambda}_{T,S} -\lambda^{'}_{T,S})^{2} = TS\left(Q^{*}_{T,S}(\hat{\theta}_{T,S}) - Q^{*}_{T,S}(\theta^{'}_{T,S})\right) + o_p(1)$.  And from \eqref{eq:6}, we get
\begin{equation*}
   T^{\frac{3}{2}}S^{\frac{1}{2}}(\hat{\lambda}_{T,S} -\lambda^{'}_{T,S}) = o_p(1); \label{eq:11} 
\end{equation*}
and \begin{equation*}S^{\frac{3}{2}}T^{\frac{1}{2}}(\hat{\mu}_{T,S} -\mu^{'}_{T,S}) = o_p(1).\end{equation*} 
Since $Q_{TS}^{*}(\theta)$ is minimized at $\theta=\theta_{TS}^{'}$; we apply multivariate mean value theorem on first derivative of $Q_{TS}^{*}(\theta)$ to get
\begin{align*}
  \begin{split}
    (Q^{*}_{T,S})_{A^0}  = -(Q^{*}_{T,S})_{\bar{A}\bar{A}}(A^{'}_{T,S}-A^0) + (Q^{*}_{T,S})_{\bar{A}\bar{B}}( B^{'}_{T,S}-B^{0}) + {}\\  (Q^{*}_{T,S})_{\bar{A}\bar{\lambda}}(\lambda^{'}_{T,S}-\lambda^0) + (Q^{*}_{T,S})_{\bar{A}\bar{\mu}}( \mu^{'}_{T,S}-\mu^0);
  \end{split}
\end{align*}
\begin{align*}
  \begin{split}
    (Q^{*}_{T,S})_{B^0}  = -(Q^{*}_{T,S})_{\bar{B}\bar{A}}( A^{'}_{T,S} - A^0) + (Q^{*}_{T,S})_{\bar{B}\bar{B}}(B^{'}_{T,S}-B^{0}) + {}\\ (Q^{*}_{T,S})_{\bar{B}\bar{\lambda}}(\lambda^{'}_{T,S}-\lambda^{0}) + (Q^{*}_{T,S})_{\bar{B}\bar{\mu}}(\mu^{'}_{T,S}-\mu^0);
  \end{split}
\end{align*}
\begin{align*}
 \begin{split}
    (Q^{*}_{T,S})_{\lambda^0}  = -(Q^{*}_{T,S})_{\bar{\lambda}\bar{A}}(A^{'}_{T,S}-A^{0}) + (Q^{*}_{T,S})_{\bar{\lambda}\bar{B}}(B^{'}_{T,S}-B^{0}) + {}\\(Q^{*}_{T,S})_{\bar{\lambda}\bar{\lambda}}(\lambda^{'}_{T,S}-\lambda^{0}) + (Q^{*}_{T,S})_{\bar{\lambda}\bar{\mu}}(\mu^{'}_{T,S}-\mu^{0});
 \end{split}
\end{align*}
\begin{align*}
 \begin{split}
    (Q^{*}_{T,S})_{\mu^0}  = -(Q^{*}_{T,S})_{\bar{\mu}\bar{A}}(A^{'}_{T,S}-A^{0}) + (Q^{*}_{T,S})_{\bar{\mu}\bar{B}}(B^{'}_{T,S}-B^{0}) + {}\\ \hspace{2cm} (Q^{*}_{T,S})_{\bar{\mu}\bar{\lambda}}(\lambda^{'}_{T,S}-\lambda^{0}) + (Q^{*}_{T,S})_{\bar{\mu}\bar{\mu}}(\mu^{'}_{T,S}-\mu^{0}).
 \end{split}
\end{align*}
Where,
$$(Q^{*}_{T,S})_{A^0} = \frac{\partial Q^{*}_{T,S}(\theta)}{\partial A}\bigg\vert_{(A^0,B^0,\lambda^0,\mu^0)}~~\text{and } ~~(Q^{*}_{T,S})_{\bar{A}\bar{B}} = \frac{\partial^{2}Q^{*}_{T,S}(\theta)}{\partial A\partial B}\bigg\vert_{(\bar{A}_{T,S},\bar{B}_{T,S},\bar{\lambda}_{T,S},\bar{\mu}_{T,S})}.$$
\\\\Note that here although the point $\bar{\theta}_{T,S} = (\bar{A}_{T,S},\bar{B}_{T,S},\bar{\lambda}_{T,S},\bar{\mu}_{T,S})$ is not same as that of $\bar{\theta}_{T,S}$ mentioned in \eqref{eq:7} but in order to avoid extra notations, we adopt the same notation here to convey the same meaning as that of \eqref{eq:7}. 
\newline Thus, we have 
\begin{align*}
\begin{split}
    \left(\sqrt{TS}(Q_{TS}^{*})_{A^0},\sqrt{TS}(Q_{TS}^{*})_{B^0},\frac{1}{\sqrt{TS}}(Q_{TS}^{*})_{\lambda^0},\frac{1}{\sqrt{TS}}(Q_{TS}^{*})_{\mu^0}\right) &= {}\\ 
    \left(\sqrt{TS}(A^{'}_{T,S}-A^0),\sqrt{TS}(B^{'}_{T,S}-B^0),T^{\frac{3}{2}}S^{\frac{1}{2}}(\lambda^{'}_{T,S}-\lambda^0),S^{\frac{3}{2}}T^{\frac{1}{2}}(\mu^{'}_{T,S}-\mu^0)\right)Z_{T,S};
\end{split} \label{eq:13}   
\end{align*}
where,\begin{equation*}
    Z_{T,S} = \begin{pmatrix}
    (Q^{*}_{T,S})_{\bar{A}\bar{A}} & (Q^{*}_{T,S})_{\bar{B}\bar{A}} & T^{-1}S^{-1}(Q^{*}_{T,S})_{\bar{\lambda}\bar{A}} & T^{-1}S^{-1}(Q^{*}_{T,S})_{\bar{\mu}\bar{A}}\\
    (Q^{*}_{T,S})_{\bar{A}\bar{B}}  & (Q^{*}_{T,S})_{\bar{B}\bar{B}} & T^{-1}S^{-1}(Q^{*}_{T,S})_{\bar{\lambda}\bar{B}} & 
    T^{-1}S^{-1}(Q^{*}_{T,S})_{\bar{\mu}\bar{B}}\\
    T^{-1}(Q^{*}_{T,S})_{\bar{A}\bar{\lambda}} & 
    T^{-1}(Q^{*}_{T,S})_{\bar{B}\bar{\lambda}} & 
    T^{-2}S^{-1}(Q^{*}_{T,S})_{\bar{\lambda}\bar{\lambda}} & 
    T^{-2}S^{-1}(Q^{*}_{T,S})_{\bar{\mu}\bar{\lambda}}\\
    S^{-1}(Q^{*}_{T,S})_{\bar{A}\bar{\mu}} & 
    S^{-1}(Q^{*}_{T,S})_{\bar{B}\bar{\mu}} & 
    S^{-2}T^{-1}(Q^{*}_{T,S})_{\bar{\lambda}\bar{\mu}} & 
    S^{-2}T^{-1}(Q^{*}_{T,S})_{\bar{\mu}\bar{\mu}}
    \end{pmatrix}.
\end{equation*}
Now we know that $\delta^{'}_{T,S}(x) = \left[-\beta^{2}_{T,S}x^{2} + 2\beta_{T,S}x\right]\mathbb{I}_{\left\{0<x\leq \frac{1}{\beta_{T,S}}\right\}} + \mathbb{I}_{\left\{x\geq \frac{1}{\beta_{T,S}}\right\}}~\text{and}~ \delta^{'}_{T,S}(-x) = -\delta^{'}_{T,S}(x) $. Hence by applying Markov's inequality it can be easily shown that,\begin{equation}
    \sqrt{TS}(Q_{TS}^{*})_{A^0} = \frac{1}{\sqrt{TS}}\sum\limits_{t=1}^{T}\sum\limits_{s=1}^{S}(-\cos(\lambda t + \mu s))\left(\mathbb{I}_{\{\epsilon(t,s)\geq\frac{1}{\beta_{T,S}}\}}-\mathbb{I}_{\{\epsilon(t,s)\leq-\frac{1}{\beta_{T,S}}\}}\right) + o_p(1)\label{eq:14}
\end{equation}
\begin{equation}
        \sqrt{TS}(Q_{TS}^{*})_{B^0} = \frac{1}{\sqrt{TS}}\sum\limits_{t=1}^{T}\sum\limits_{s=1}^{S}(-\sin(\lambda t + \mu s))\left(\mathbb{I}_{\{\epsilon(t,s)\geq\frac{1}{\beta_{T,S}}\}}-\mathbb{I}_{\{\epsilon(t,s)\leq-\frac{1}{\beta_{T,S}}\}}\right) + o_p(1)\label{eq:15}
\end{equation}
\begin{align}
\begin{split}
       \frac{1}{\sqrt{TS}}(Q_{TS}^{*})_{\lambda^0} = \frac{1}{\sqrt{T^{3}S}}\sum\limits_{t=1}^{T}\sum\limits_{s=1}^{S}(A^{0}t\sin(\lambda t + \mu s) - B^{0}t\cos(\lambda t + \mu s)){}\\\left(\mathbb{I}_{\{\epsilon(t,s)\geq\frac{1}{\beta_{T,S}}\}}-\mathbb{I}_{\{\epsilon(t,s)\leq-\frac{1}{\beta_{T,S}}\}}\right) + o_p(1)\label{eq:16}
\end{split}
\end{align}
\begin{align}
\begin{split}
       \frac{1}{\sqrt{TS}}(Q_{TS}^{*})_{\mu^0} = \frac{1}{\sqrt{S^{3}T}}\sum\limits_{t=1}^{T}\sum\limits_{s=1}^{S}(A^{0}s\sin(\lambda t + \mu s) - B^{0}s\cos(\lambda t + \mu s)){}\\ \left(\mathbb{I}_{\{\epsilon(t,s)\geq\frac{1}{\beta_{T,S}}\}}-\mathbb{I}_{\{\epsilon(t,s)\leq-\frac{1}{\beta_{T,S}}\}}\right) + o_p(1)\label{eq:17}
\end{split}
\end{align}\\

The sums \eqref{eq:14}-\eqref{eq:17} are of the form $\sum\limits_{t=1}^{T}\sum\limits_{s=1}^{S}U_{t,s}$, $U_{t,s}$ appropriately defined as in the corresponding equation. For \eqref{eq:14} we have,
\begin{eqnarray*}
       \mathbb{E}(U_{t,s}) &=& \sum\limits_{t=1}^{T}\sum\limits_{s=1}^{S}(-\cos(\lambda t + \mu s))\mathbb{E}\left(\mathbb{I}_{\{\epsilon(t,s)\geq\frac{1}{\beta_{T,S}}\}}-\mathbb{I}_{\{\epsilon(t,s)\leq-\frac{1}{\beta_{T,S}}\}}\right)\\
       &=& \sum\limits_{t=1}^{T}\sum\limits_{s=1}^{S}(-\cos(\lambda t + \mu s))\left(\int\limits_{\frac{1}{\beta_{T,S}}}^{\infty}d(G(\epsilon(t,s)))-\int\limits^{-\frac{1}{\beta_{T,S}}}_{-\infty}d(G(\epsilon(t,s)))\right)\\
       &=& \sum\limits_{t=1}^{T}\sum\limits_{s=1}^{S}(-\cos(\lambda t + \mu s))\left(1-G\left(\frac{1}{\beta_{T,S}}\right)-G\left(-\frac{1}{\beta_{T,S}}\right)\right).
\end{eqnarray*}
As $T,S\longrightarrow\infty$, we have $\frac{1}{\beta_{T,S}}\longrightarrow 0$; therefore, $\mathbb{E}(U_{t,s}) = o(1)$. Similarly for the equations \eqref{eq:15}-\eqref{eq:17}, we can show that, 
\begin{equation}\mathbb{E}(U_{t,s}) = o(1).\label{eq:18}\end{equation}

Now proceeding similarly as above we can show that,\begin{align*}
    \text{Var}\left(\sum\limits_{t=1}^{T}\sum\limits_{s=1}^{S}U_{t,s}\right) &= \begin{cases}
        \frac{1}{2} + o(1), & \text{for \eqref{eq:14},\eqref{eq:15}}\\\\
        \frac{{A^{0}}^{2}+{B^{0}}^{2}}{6} + o(1), & \text{for \eqref{eq:16},\eqref{eq:17}}
    \end{cases}\\
    &= B_{T,S},~~\text{(say)}.
\end{align*}

Further, we can show that for the above random variable $U_{t,s}$, the Lindeberg condition, i.e. $\lim\limits_{T,S\rightarrow\infty}\frac{1}{B_{T,S}}\sum\limits_{t=1}^{T}\sum\limits_{s=1}^{S}\mathbb{E}\left(U_{t,s}^{2}\mathbb{I}_{\left\{|U_{T,S}|\geq \epsilon \sqrt{ B_{T,S}}\right\}}\right) = 0$ holds and therefore by applying Lindeberg-Feller Central theorem (\cite{vaart}) we can conclude that, \\\\$\sqrt{TS}(Q_{TS}^{*})_{A^0}$, $\sqrt{TS}(Q_{TS}^{*})_{B^0}, \frac{1}{\sqrt{TS}}(Q_{TS}^{*})_{\lambda^0}$ and $\frac{1}{\sqrt{TS}}(Q_{TS}^{*})_{\mu^0}$ converges in distribution to $N(0,1/2)$, $N(0,1/2)$,  $N\left(0,\frac{{A^{0}}^{2} + {B^{0}}^{2}}{6}\right)$ and  $N\left(0,\frac{{A^{0}}^{2} + {B^{0}}^{2}}{6}\right)$, respectively.\\\\
Now we make use of the Cramer-Wald device to find the asymptotic joint distribution of $\left(\sqrt{TS}(Q_{TS}^{*})_{A^0},\sqrt{TS}(Q_{TS}^{*})_{B^0},\frac{1}{\sqrt{TS}}(Q_{TS}^{*})_{\lambda^0},\frac{1}{\sqrt{TS}}(Q_{TS}^{*})_{\mu^0}\right)$. Consider the linear combination, \begin{equation}
    A_{T,S} = a_1 (Q_{TS}^{'})_{A^0} + a_2 (Q_{TS}^{'})_{B^0} + a_3 (Q_{TS}^{'})_{\lambda^0} + a_4 (Q_{TS}^{'})_{\mu^0}; \label{eqn:ATS}
\end{equation}
where, $a_1,a_2,a_3~\text{and}~a_4$ are arbitrary real numbers. Now using \eqref{eq:18},  we have for \ref{eqn:ATS}, $$\mathbb{E}(A_{T,S}) =o(1)$$ and by using Lemma \ref{lemma:1} we can easily derive that
\begin{align*}
\begin{split}
\text{Var}(A_{T,S}) = \frac{a^{2}_{1}}{2} + \frac{a^{2}_{2}}{2} + \frac{{A^{0}}^{2} + {B^{0}}^{2}}{6} a^{2}_{3}+ \frac{{A^{0}}^{2} + {B^{0}}^{2}}{6} a^{2}_{4} + {}\\\qquad\frac{B^{0}}{2}a_{1}a_{3} + \frac{B^{0}}{2}a_{1}a_{4} + \frac{A^{0}}{2}a_{2}a_{3} +\frac{A^{0}}{2}a_{2}a_{4} + \frac{{A^{0}}^{2} + {B^{0}}^{2}}{4}a_{3}a_{4} + o(1).
\end{split}
\end{align*}

We can express the  random variable $A_{T,S}$ as $\sum\limits_{t=1}^{T}\sum\limits_{s=1}^{S}U_{t,s}$ and using the Lindeberg condition it is possible to apply the central limit theorem to $A_{T,S}$. Further, by applying the Cramer-Wald device we obtain that,\\
$\left(\sqrt{TS}(Q_{TS}^{*})_{A^0},\sqrt{TS}(Q_{TS}^{*})_{B^0},\frac{1}{\sqrt{TS}}(Q_{TS}^{*})_{\lambda^0},\frac{1}{\sqrt{TS}}(Q_{TS}^{*})_{\mu^0}\right)Z^{-1}_{T,S}$ converges in distribution to $N_4(0,\Sigma)$; where $$\Sigma = \begin{pmatrix}
\frac{1}{2} & 0 & \frac{B^{0}}{4} & \frac{B^{0}}{4}\\\\
0 & \frac{1}{2} & -\frac{A^{0}}{4} & -\frac{A^{0}}{4}\\\\
\frac{B^{0}}{4} & -\frac{A^{0}}{4} & \frac{{A^{0}}^{2} + {B^{0}}^{2}}{6} & \frac{{A^{0}}^{2} + {B^{0}}^{2}}{8} \\\\
\frac{B^{0}}{4} & -\frac{A^{0}}{4} & \frac{{A^{0}}^{2} + {B^{0}}^{2}}{8} & \frac{{A^{0}}^{2} + {B^{0}}^{2}}{6}
\end{pmatrix}.$$
It is easy to show that $\lim\limits_{T,S\rightarrow\infty}Z_{T,S} = 2g(0)\Sigma$. 
\newline Now,
\begin{align*}
    \begin{split}
        \left(\sqrt{TS}(\hat{A}_{T,S}-A^0),\sqrt{TS}(\hat{B}_{T,S}-B^0),T^{\frac{3}{2}}S^{\frac{1}{2}}(\hat{\lambda}_{T,S}-\lambda^0),S^{\frac{3}{2}}T^{\frac{1}{2}}(\hat{\mu}^{'}_{T,S}-\mu^0)\right) &= { } \text{  \:\:       } \\ \qquad \qquad  \underbrace{\left(\sqrt{TS}(A^{'}_{T,S}-A^0),\sqrt{TS}(B^{'}_{T,S}-B^0),T^{\frac{3}{2}}S^{\frac{1}{2}}(\lambda^{'}_{T,S}-\lambda^0),S^{\frac{3}{2}}T^{\frac{1}{2}}(\mu^{'}_{T,S}-\mu^0)\right)}_{A} + {}\\ \qquad\qquad
        \underbrace{\left(\sqrt{TS}(\hat{A}-A^{'}_{T,S}),\sqrt{TS}(\hat{B}-B^{'}_{T,S}),T^{\frac{3}{2}}S^{\frac{1}{2}}(\hat{\lambda}-\lambda^{'}_{T,S}),S^{\frac{3}{2}}T^{\frac{1}{2}}(\hat{\mu}-\mu^{'}_{T,S})\right)}_{B}
    \end{split}
\end{align*}
Now (B) goes to $0$ in probability and (A) has the same asymptotic distribution as that of $\left(\sqrt{TS}(Q_{TS}^{*})_{A^0},\sqrt{TS}(Q_{TS}^{*})_{B^0},\frac{1}{\sqrt{TS}}(Q_{TS}^{*})_{\lambda^0},\frac{1}{\sqrt{TS}}(Q_{TS}^{*})_{\mu^0}\right)Z_{T,S}^{-1}$, i.e. has asymptotic distribution as  $N_4\left(0,\frac{1}{4g^{2}(0)}\Sigma^{-1}\right)$. Applying the Slutsky's lemma, we obtain the desired result.  

\subsection{Proof of Theorem 3} \label{Appendix:3}
Let $\hat{\theta}^{4p\times 1}_{T,S} = (\hat{A}_{T,S;1},\hat{B}_{T,S;1},\hat{\lambda}_{T,S;1},\hat{\mu}_{T,S;1},\dots,\hat{A}_{T,S;p},\hat{B}_{T,S;p},\hat{\lambda}_{T,S;p},\hat{\mu}_{T,S;p})^{'}$ be the LAD estimator obtained by minimizing the quantity given by \eqref{eq:2}. Proceeding similarly as in the proof of Theorem \ref{theorem:1}, we can show that, $$H_{T,S}(\theta) - \lim\limits_{T,S\rightarrow\infty}\mathbb{E}(H_{T,S}(\theta)) \overset{\text{a.s.}}\longrightarrow 0~~~ \text{uniformly}~~\forall~~\theta\in\Theta,$$ and also that the true value $\theta_0 $ is the unique global minimizer of $\lim\limits_{T,S\rightarrow\infty}\mathbb{E}(H_{T,S}(\theta))$. Hence, from Lemma 2.2 of \cite{white}, strong consistency of  $\hat{\theta}_{T,S}$ follows.

\end{document}